\numberwithin{equation}{section}
\theoremstyle{plain}
\newtheorem{lema}{Lemma}[section]
\newtheorem{prop}[lema]{Proposition}
\newtheorem{teo}[lema]{Theorem}
\newtheorem{ques}[lema]{Question}
\theoremstyle{definition}
\newtheorem{ej}[lema]{Example}
\def\ZZ{{\mathbb Z}}
\def\NN{{\mathbb N}}
\def\D{\Delta}
\def\i{\imath}
\def \les {\preceq_{\imath}}
\def \less {\preceq'_{\imath}}
\def \PP{\mathcal{P}_{r}}
\def\cf{\mathfrak{c}}
\def \pf{\mathfrak{p}}
\def \qf{\mathfrak{q}}
\def \ssf{\mathfrak{s}}
\def \cl{\mathrm{cl}}
 \newcommand{\al}{\alpha}
\begin{document}

\title[Complexes of Multichains]{On the homeomorphism and homotopy type of complexes of multichains}

\begin{abstract}
In this paper we define and study for a finite partially ordered set $P$ a class
of simplicial complexes on the set $P_r$ of $r$-element multichains 
from $P$. The simplicial complexes depend on a strictly monotone
function from $[r]$ to $[2r]$. We show that there exactly $2^r$ such functions which yield subdivisions of the order complex of
$P$ of which $2^{r-1}$ are pairwise different. Within this class are for example
the order complexes of the 
interval and the zig-zag poset of $P$ and the $r$\textsuperscript{th} edgewise subdivision of the order complex of $P$. We also exhibit a large subclass for which our simplicial complexes are order complexes and homotopy equivalent to the order complex of $P$.
\end{abstract}

%\subjclass[2010]{}

%\keywords{}

\author[S.~Nazir]{Shaheen Nazir}
\address{Department of Mathematics, Lahore University of Management Sciences, Lahore, Pakistan}
\email{shaheen.nazir@lums.edu.pk}
\author[V.~Welker]{Volkmar Welker}
\address{Philipps-Universit\"at Marburg, Fachbereich Mathematik und Informatik, 35032 Marburg, Germany}
\email {welker@mathematik.uni-marburg.de}

\maketitle
%\tableofcontents
\section{Introduction}

Let $P$ be a poset with order relation $\leq$. In this paper we study the homeomorphism and homotopy type of simplicial complexes associated to multichains in $P$. For a number $r \geq 1$ we consider
the set $P_r$ of all $r$-multichains
$\mathfrak{p} : p_1 \leq \cdots \leq p_r$ in $P$. 

If $r = 1$ then $P_r = P$ and the order complex $\Delta(P)$ of all linearly ordered subsets of $P$ together with its 
geometric realization are well studied geometric and topological objects. They have been shown to encode crucial information about $P$
and have important applications in combinatorics and many other 
fields in mathematics. We refer to reader to \cite{wachs2006poset} for further information and 
background information. 

In this paper we define for $r \geq 2$  classes of simplicial 
complexes associated to $P_r$. In contrast to $r = 1$ there does not
seem to be a canonical choice. In \cite{muehle2015} a poset structure 
on $P_r$ is defined whose order complex can be shown to be homotopy equivalent to $P$. We 
provide this result which was missed in \cite{muehle2015} in 
Proposition \ref{thm:missedbymuehle}. The main focus of our paper is a different construction which leads to a wide class of simplicial complexes associated to $P_r$. Our construction arose during a discussion of preliminary versions of 
work of the first author in \cite{Nazirnew}. As special cases the  construction yields several well
studied and important poset and subdivision operations such as
interval posets (see \cite{walker1988canonical}),
the zig-zag poset (see \cite{peevareinersturmfels}), the $r$\textsuperscript{th}-edgewise subdivision (see \cite{edelsbrunner2000edgewise}) and 
a subdivision operation which arose in the work of Cheeger, M\"uller and Schrader 
(see \cite{CheegerMuller}). We refer the reader to \cite{Nazirnew} for a detailed account of how these arise from our construction (see also Example \ref{ex:general}).

For every strictly monotone map 
$\i : [r]  \rightarrow [2r]$ we define a binary relation
$\preceq_i$ on $P_r$. Here for a natural number $n$ we write $[n]$ for $\{1,\ldots, n\}$. Through the undirected graph 
$G_\i(P_r) = (P_r,E)$ with edge set 
$$E = \big\{ \{\mathfrak{p},\mathfrak{q} \} \subseteq P_r ~:~\mathfrak{p} \preceq_\i \mathfrak{q} \text{ and } \mathfrak{p} \neq \mathfrak{q} \big\}$$
we associate to $P_r$ and $\i$ the clique complex
$\Delta(G_\i(P_r))$ of $G_\i(P_r)$; that is the simplicial complex of all
subsets $A \subseteq P_r$ which form a clique in $G_\i(P_r)$. 

\begin{teo} \label{teo:reflexive}
  For $r \geq 2$ the following are equivalent 
  \begin{itemize}
      \item[(1)] The relation $\preceq_\i$ is reflexive,
      \item[(2)] The clique complex $\Delta(G_\i(P_r))$ is a subdivision of $\Delta(P)$.
      \item[(3)] The clique complex $\Delta(G_\i(P_r))$ is homeomorphic to $\Delta(P)$.
  \end{itemize}
\end{teo}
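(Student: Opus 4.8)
The implication $(2)\Rightarrow(3)$ is immediate, since a subdivision of a simplicial complex has the same geometric realization up to homeomorphism; so the plan is to establish $(1)\Rightarrow(2)$ and the contrapositive of $(3)\Rightarrow(1)$. It is convenient to first reformulate reflexivity. Reading $\mathfrak p\preceq_\i\mathfrak q$ as the condition that the length‑$2r$ word obtained by placing $\mathfrak p$ in the positions $\im(\i)$ and $\mathfrak q$ in the complementary positions is again a multichain of $P$, the assertion ``$\mathfrak p\preceq_\i\mathfrak p$ for every $\mathfrak p\in P_r$'' becomes ``the self‑word of every multichain is non‑decreasing'', and one checks that this holds precisely when $|\im(\i)\cap[2m]|=m$ for all $m\in[r]$, i.e.\ $\i(m)\in\{2m-1,2m\}$ for each $m$; call such an $\i$ \emph{normal}. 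There are $2^r$ normal maps, and --- apart from the degenerate case where $P$ is an antichain (then $P_r\cong P$, $G_\i(P_r)$ has no edges, and all three conditions hold trivially) --- reflexivity of $\preceq_\i$ amounts to normality of $\i$. I therefore assume $P$ is not an antichain and use ``reflexive'' and ``normal'' interchangeably.

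For $(1)\Rightarrow(2)$ I would argue in three steps. \emph{Step 1.} Assume $\i$ normal. Then for each $m\in[r]$ the positions $2m-1,2m$ of the word carry $\{p_m,q_m\}$ in one of the two orders, so $\mathfrak p\preceq_\i\mathfrak q$ entails a definite inequality between $p_m$ and $q_m$; this at once makes $\preceq_\i$ antisymmetric and rules out a directed $3$‑cycle among pairwise $\preceq_\i$‑comparable multichains. Hence a clique $A$ of $G_\i(P_r)$, viewed as a tournament under $\preceq_\i$, is a transitive tournament, i.e.\ a chain; and since $\mathfrak p\preceq_\i\mathfrak q$ forces $\{p_1,\dots,p_r\}\cup\{q_1,\dots,q_r\}$ to be a chain of $P$, so is $\bigcup_{\mathfrak p\in A}\{p_1,\dots,p_r\}$. \emph{Step 2.} Define $f\colon|\Delta(G_\i(P_r))|\to|\Delta(P)|$ on vertices by $f(\mathfrak p)=\tfrac1r\sum_{k=1}^r e_{p_k}$ (the sum with multiplicities, $e_x$ the vertex of $\Delta(P)$ at $x$) and extend affinely over faces. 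By Step 1, $f$ carries every face into a face of $\Delta(P)$, and every face of $\Delta(G_\i(P_r))$ lies in $\Delta(G_\i(\tau_r))$ for some chain $\tau$; since for each chain $\tau\subseteq P$ the multichains supported in $\tau$ induce exactly the subcomplex $\Delta(G_\i(\tau_r))$, which $f$ maps into $|\Delta(\tau)|$, and these subcomplexes agree on intersections, it suffices to prove that $f$ exhibits $\Delta(G_\i(\tau_r))$ as a subdivision of the simplex $\Delta(\tau)$ for every chain $\tau$. \emph{Step 3 (the simplex case).} For $\tau$ with $|\tau|=n$ the multichains in $\tau$ correspond under $f$, independently of $\i$, to the points of $\Delta^{n-1}=|\Delta(\tau)|$ whose barycentric coordinates lie in $\tfrac1r\ZZ_{\geq 0}$. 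One must show that every maximal clique of $G_\i(\tau_r)$ has exactly $n$ elements, that normality makes $f$ injective on it with unimodular image (a simplex of normalized volume $1$), that the images of distinct maximal cliques meet in a common face, and that there are exactly $r^{n-1}$ maximal cliques --- a volume count then forces the images to cover $|\Delta(\tau)|$. For $\im(\i)=\{1,3,\dots,2r-1\}$ this specializes to the known fact that the $r$\textsuperscript{th} edgewise subdivision triangulates a simplex.

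For $(3)\Rightarrow(1)$ I argue the contrapositive. Suppose $\preceq_\i$ is not reflexive, so $\i$ is not normal. Fix a chain $\tau=(x_1<\dots<x_\ell)$ of $P$ of maximum length, so $\dim\Delta(P)=\ell-1$; since $\i$ is not normal and $\ell\geq 2$, the self‑word of some multichain supported in $\tau$ still has a genuine descent, and exploiting it one builds inside $\Delta(G_\i(\tau_r))$ a clique with strictly more than $\ell$ vertices. The mechanism already appears for $r=2$ and $\im(\i)=\{1,2\}$, where $\mathfrak p\preceq_\i\mathfrak q\iff p_2\leq q_1$ and
\[
(x_1,x_1)\preceq_\i(x_1,x_2)\preceq_\i(x_2,x_2)\preceq_\i(x_2,x_3)\preceq_\i\cdots\preceq_\i(x_\ell,x_\ell)
\]
is a clique of size $2\ell-1$. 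Then $\dim\Delta(G_\i(P_r))\geq\dim\Delta(G_\i(\tau_r))>\ell-1=\dim\Delta(P)$, so $\Delta(G_\i(P_r))$ is neither a subdivision of $\Delta(P)$ nor homeomorphic to it.

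The hard part is Step 3 of $(1)\Rightarrow(2)$: pinning down the maximal cliques for an arbitrary normal $\i$ and proving that the resulting lattice simplices triangulate the dilated simplex --- a family of triangulation statements generalizing the edgewise subdivision, which one would want to prove uniformly in $\i$ rather than through $2^r$ separate arguments. Step 1 and the reduction to a single chain are comparatively soft, and the contrapositive of $(3)\Rightarrow(1)$ needs only one explicit family of cliques together with the dimension bound.
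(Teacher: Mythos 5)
Your skeleton coincides with the paper's: $(2)\Rightarrow(3)$ is immediate, reflexivity is equivalent to $\i(t)\in\{2t-1,2t\}$ for all $t$, the antichain case is degenerate, and $(1)\Rightarrow(2)$ is attacked via the averaging map $f(\pf)=\tfrac1r\sum_k e_{p_k}$ restricted to the multichains supported on a fixed chain. However, both nontrivial implications are left with real holes. For $(1)\Rightarrow(2)$ you defer the entire mathematical content to your ``Step 3'': that every maximal clique of $G_\i(\tau_r)$ has exactly $n$ elements, that its image under $f$ is a unimodular simplex, that there are exactly $r^{n-1}$ maximal cliques, and that a volume count then forces a triangulation. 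None of this is established, and you acknowledge it is the hard part; as written, $(1)\Rightarrow(2)$ is a program, not a proof. The paper avoids the clique enumeration and volume bookkeeping entirely: Lemma \ref{lem:help} shows directly that $|f|$ restricted to the points of $|\Delta(G_\i(P_r))|$ carried by a fixed chain $\cf$ is a bijection onto the open simplex spanned by $\cf$, by explicitly constructing for each interior point $z=\sum\lambda_ip_i$ its unique preimage, namely a chain $\qf_1\prec_\i\cdots\prec_\i\qf_m$ with barycentric weights read off from the quantities $\al(\ell)$, which depend on whether $\i(i)=2i-1$ or $\i(i)=2i$. If you pursue your route you must actually determine the maximal cliques for an arbitrary normal $\i$, which is not visibly easier than the paper's inverse construction.

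The $(3)\Rightarrow(1)$ direction contains a step that is false as stated: non-reflexivity does \emph{not} always produce a clique with more than $\ell$ vertices inside a maximal chain of $\ell$ elements. Take $r=3$, $\im(\i)=\{1,2,4\}$ (the map of Example \ref{ex:counter}) and $P$ a chain with $\ell\geq 6$ elements. Here $\pf\preceq_\i\qf$ is equivalent to $p_2\leq q_1\leq p_3\leq q_2$, which implies $p_k\leq q_k$ for all $k$; hence any clique is linearly ordered, $\pf^1\prec_\i\cdots\prec_\i\pf^k$, and for any $i<j<l$ the three pairwise relations force $p^l_1\leq p^i_3\leq p^j_2\leq p^l_1$, so all of $p^i_3$ $(i\leq k-2)$, $p^j_2$ $(2\leq j\leq k-1)$, $p^l_1$ $(3\leq l\leq k)$ coincide. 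Then $\pf^3=\cdots=\pf^{k-2}$ are constant multichains, and distinctness gives $k\leq 5<\ell+1$. So the dimension argument yields nothing here, and your proof of $(3)\Rightarrow(1)$ collapses. This is exactly why the paper's Lemma \ref{lem:dim} proves the weaker dichotomy that either $\dim\Delta(G_\i(P_r))>\dim\Delta(P)$ \emph{or} $\Delta(G_\i(P_r))$ is not pure (Subcases A4 and B2 are the configurations where only non-purity is available), and why its reduction to chains is phrased so that non-purity of the top-dimensional part already obstructs a homeomorphism. Your argument needs this second alternative, together with a verification that the exhibited smaller cliques are genuinely maximal.
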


In the formulation of the theorem and the rest of the paper, we use the term subdivision in 
the sense of geometric subdivision (see e.g. \cite{Stanley}). Also when we speak of
homotopy equivalent or homeomorphic simplicial complexes we mean that their geometric realizations
are homotopy equivalent or homeomorphic.

The cases treated in the theorem include the above mentioned
subdivision operations as special cases.
Indeed, in Lemma \ref{lem:order}(1) we show that for fixed $r$ there are exactly $2^{r-1}$ different
$\i$ for which $\preceq_\i$ is reflexive and $\i(1) = 1$. We will see that the latter condition eliminates an obvious symmetry. 

If $\preceq_\i$ is even a partial order then $\Delta(G_\i(P_r))$ coincides with the order complex
of $P_r$ with respect to $\preceq_\i$. 
The following proposition shows that for each $r$ there is exactly one $\i$ for which $\preceq_\i$ is a partial order.

\begin{prop}
  \label{prop:partial}
  The relation $\les$ is a partial order on $P_r$ if and only if
  for $2 \leq t \leq r$ we have $\imath(t)=\left\{
             \begin{array}{ll}
               2t, & \hbox{$t$ is even;} \\
               2t-1, & \hbox{$t$ is odd.}
             \end{array}
           \right.$
\end{prop}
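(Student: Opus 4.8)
The plan is to reduce to the reflexive case, make $\les$ explicit coordinate by coordinate, note that antisymmetry is automatic, and then isolate transitivity as the property that pins down $\i$. Since a partial order is in particular reflexive, Lemma~\ref{lem:order}(1) gives $\i(t)\in\{2t-1,2t\}$ for every $t$; write $\i(t)=2t-1+\epsilon_t$ with $\epsilon_t\in\{0,1\}$, so that $[2r]\setminus\im\i=\{\,2t-\epsilon_t:t\in[r]\,\}$. Unwinding the definition, $\mathfrak p\les\mathfrak q$ says precisely that the $2r$-tuple obtained by putting the entries of $\mathfrak p$ in the positions $\im\i$ and those of $\mathfrak q$ in the remaining positions is a multichain. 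Comparing consecutive entries, this splits into the $r$ \emph{within-block} conditions (one per block $\{2t-1,2t\}$)
\[ q_t\le p_t\ \ (\epsilon_t=1),\qquad p_t\le q_t\ \ (\epsilon_t=0),\qquad t\in[r], \]
together with $r-1$ \emph{between-block} conditions: the one joining the blocks of $t$ and $t+1$ holds automatically when $\epsilon_t\ne\epsilon_{t+1}$ (it then reads $p_t\le p_{t+1}$ or $q_t\le q_{t+1}$, valid since $\mathfrak p,\mathfrak q$ are multichains), and otherwise reads $p_t\le q_{t+1}$ if $\epsilon_t=\epsilon_{t+1}=1$, resp.\ $q_t\le p_{t+1}$ if $\epsilon_t=\epsilon_{t+1}=0$.

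Antisymmetry is then immediate: if $\mathfrak p\les\mathfrak q$ and $\mathfrak q\les\mathfrak p$ then for each $t$ the two within-block conditions force $p_t=q_t$. So the statement reduces to deciding when $\les$ is transitive. If the word $(\epsilon_1,\dots,\epsilon_r)$ is alternating, i.e.\ $\epsilon_t\ne\epsilon_{t+1}$ for $1\le t\le r-1$, then no between-block condition is ever nontrivial, and $\mathfrak p\les\mathfrak q$ holds exactly when $\mathfrak p\le\mathfrak q$ in the product order on $P^r$ after dualizing the $t$-th factor precisely for the $t$ with $\epsilon_t=1$; being the restriction of a partial order to the subset $P_r\subseteq P^r$, $\les$ is then a partial order, in particular transitive.

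Conversely, suppose $\epsilon_t=\epsilon_{t+1}$ for some $t\in[r-1]$; I would show that $\les$ fails transitivity over a suitable $P$. Take $P$ containing a four-element diamond $\widehat{0}<b,c<\widehat{1}$ with $b,c$ incomparable, and choose $\mathfrak p,\mathfrak q,\mathfrak s\in P_r$ that are constantly $\widehat{0}$ in coordinates $<t$, constantly $\widehat{1}$ in coordinates $>t+1$, and have $(t,t+1)$-entries $(\widehat{0},c),(\widehat{0},\widehat{1}),(b,\widehat{1})$ for $\mathfrak p,\mathfrak q,\mathfrak s$ respectively when $\epsilon_t=\epsilon_{t+1}=0$, and the reversed list $(b,\widehat{1}),(\widehat{0},\widehat{1}),(\widehat{0},c)$ when $\epsilon_t=\epsilon_{t+1}=1$. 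The padding by $\widehat{0}$ and $\widehat{1}$ makes every within-block condition and every between-block condition except the one joining blocks $t$ and $t+1$ trivially true; one then checks $\mathfrak p\les\mathfrak q$ and $\mathfrak q\les\mathfrak s$, while the between-block condition joining $t$ and $t+1$ for $\mathfrak p\les\mathfrak s$ reads $b\le c$ and fails. Hence $\les$ is not transitive. Combining the two directions, $\les$ is a partial order if and only if it is reflexive and $(\epsilon_1,\dots,\epsilon_r)$ is alternating; imposing the normalization $\i(1)=1$, i.e.\ $\epsilon_1=0$ — which removes the reflection symmetry discussed before the statement and makes such an $\i$ unique — then forces $\epsilon_t=0$ for odd $t$ and $\epsilon_t=1$ for even $t$, that is $\i(t)=2t-1$ for $t$ odd and $\i(t)=2t$ for $t$ even, which is the asserted formula. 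The main obstacle is the transitivity analysis, in particular its ``only if'' direction, where one must produce, for each bad sign pattern and each coordinate $t$, an explicit poset and three multichains witnessing non-transitivity; antisymmetry and the ``if'' direction are essentially formal once the coordinatewise form of $\les$ is available.
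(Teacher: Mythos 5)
Your overall skeleton matches the paper's: reflexivity pins down $\i(t)\in\{2t-1,2t\}$ (Lemma \ref{lem:order}(1)), antisymmetry is automatic (Lemma \ref{lem:simple}(1)), and transitivity is the condition that forces the alternation. Your ``if'' direction is in fact cleaner than the paper's: identifying $\les$ for an alternating sign pattern with the restriction to $P_r$ of a partly dualized product order on $P^r$ makes reflexivity, antisymmetry and transitivity all immediate, whereas the paper routes everything through the block inequalities \eqref{bc} of Lemma \ref{lem:order}(2). Your remark that the statement needs the normalization $\i(1)=1$ (otherwise $\i(1)=2,\i(2)=3,\dots$ gives the dual interval/zig-zag order, also a partial order, violating the displayed formula) is correct and consistent with the convention the paper fixes before Lemma \ref{lem:simple}.

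There is, however, a genuine gap in your ``only if'' direction. The proposition is about a fixed poset $P$ (implicitly assumed, as in Lemma \ref{lem:order}, to contain a chain of length one), and the paper applies it precisely to chains such as $P=\{1<2<3\}$. Your witness for the failure of transitivity when $\epsilon_t=\epsilon_{t+1}$ requires $P$ to contain a four-element diamond $\widehat0<b,c<\widehat1$ with $b,c$ incomparable; the whole point of your construction is that the offending between-block inequality reads $b\le c$ and fails by incomparability. If $P$ is a chain there is no such diamond, and your argument proves nothing, even though the conclusion is still true and is needed there (e.g.\ $\i=(1,3)$ on a two-element chain is reflexive but not transitive). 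The fix is what the paper's proof of Lemma \ref{lem:order}(2) does: work inside a single two-element chain $p<q$ and take $\pf,\qf,\ssf$ of the form $p\le\cdots\le p\le q\le\cdots\le q$ with carefully chosen switch points, so that the violated between-block condition becomes $q\le p$. Concretely, in your normal form with $\epsilon_t=\epsilon_{t+1}=0$ one can take $(p_t,p_{t+1})=(p,p)$, $(q_t,q_{t+1})=(p,q)$, $(s_t,s_{t+1})=(q,q)$, padded by $p$ before and $q$ after; then $\pf\les\qf$ and $\qf\les\ssf$ but the condition $s_t\le p_{t+1}$ for $\pf\les\ssf$ reads $q\le p$ and fails. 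With that replacement (and its mirror for $\epsilon_t=\epsilon_{t+1}=1$) your proof is complete.
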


For the $\i$ from the above proposition and $r=2$ we have that $\preceq_\i$ coincides with
the interval order on $P$ and for
arbitrary $r \geq 2$ we have that
$\preceq_\i$ defines the zig-zag order on $P_r$
(see \cite{peevareinersturmfels} and Example \ref{ex:important}). It can be seen (see \cite{Nazirnew}) that the order complex of the zig-zag order on $P_r$
coincides for even $r$ with a subdivision operation studied in \cite{CheegerMuller}.

The condition that $\preceq_\i$ is reflexive provides the most restrictions on
$\i$ when classifying the $\preceq_\i$ which are partial orders.
Thus instead of the relation $\preceq_\i$, we may consider the relation $\preceq_\i'$ which is
defined as follows:

$$\mathfrak{p} \preceq_\i' \mathfrak{q} :\Leftrightarrow
\left\{ \begin{array}{c} \mathfrak{p} = \mathfrak{q} \text{ or }\\
                      \mathfrak{p} \neq \mathfrak{q} \text{ and }
                      \mathfrak{p} \preceq_\i \mathfrak{q}
                      \end{array} \right.$$

We classify in Proposition \ref{prop:order} when $\preceq_\i'$ is 
partial order and show the following theorem.

\begin{teo} \label{teo:homotopy}
  If $\preceq_\i'$ is a partial order then the order complex of
  $P_r$ with respect to $\preceq_\i'$ is homotopy equivalent to $\Delta(P)$.
\end{teo}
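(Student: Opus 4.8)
The plan is to split according to whether $\les$ — equivalently $\preceq_\i$, since the two relations differ only on the diagonal of $P_r\times P_r$ — is reflexive. If $\preceq_\i$ is reflexive, then by Theorem~\ref{teo:reflexive} the clique complex $\D(G_\i(P_r))$ is a subdivision of $\D(P)$, hence homeomorphic to it; and since $\less$ is a partial order, $\D(G_\i(P_r))$ is exactly the order complex of $(P_r,\less)$, so we are done. So from now on I would assume that $\preceq_\i$ is \emph{not} reflexive, i.e.\ $\les\subsetneq\less$; this is the case not already covered by Theorem~\ref{teo:reflexive}. For such $\i$ one should read off from the classification in Proposition~\ref{prop:order} that $p_1$, or symmetrically $p_r$, occupies an extreme entry of the length-$2r$ sequence whose being a weakly increasing chain in $P$ defines $\preceq_\i$; after replacing $\i$ by its reverse–complement and $P$ by $P^{\mathrm{op}}$ if necessary (which changes neither the statement nor $\D(P)$, as $\D(P^{\mathrm{op}})=\D(P)$), I may assume it is $p_1$, in the first entry.

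The main tool will be Quillen's fiber lemma. Consider the order-preserving map
$$f\colon(P_r,\less)\longrightarrow P,\qquad f(p_1\le\cdots\le p_r)=p_1 ;$$
it is well-defined and monotone because $q_1$ occurs somewhere after the first entry of the defining sequence, so $\pf\les\qf$ forces $p_1\le q_1$ (and $\less$ is reflexive). By Quillen's fiber lemma (see e.g.\ \cite{wachs2006poset}) it then suffices to prove that for each $p\in P$ the fiber $f^{-1}(\{x\in P:x\le p\})=\{\pf\in P_r:p_1\le p\}$, ordered by $\less$, is contractible. The contraction I would try is the self-map $\rho(\pf)=(p_1,\dots,p_1)$ of this fiber: it is idempotent, and its image is the subposet of constant multichains $(x,\dots,x)$ with $x\le p$, which is isomorphic to the principal ideal $P_{\le p}$ and hence contractible (it has $p$ as greatest element). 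Granting that $\rho$ is monotone for $\less$ and that $\rho(\pf)$ is always $\less$-comparable with $\pf$, the standard closure/interior-operator argument (also in \cite{wachs2006poset}) shows that the fiber deformation retracts onto the image of $\rho$, hence is contractible, which finishes the proof.

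In the transparent case $\i=(1,2,\dots,r)$ — where $\pf\les\qf$ means simply $p_r\le q_1$ — both assertions are immediate: the diagonal $s\colon p\mapsto(p,\dots,p)$ is monotone, it is a section of $f$, and $\rho=s\circ f$ satisfies $\rho(\pf)\less\pf$, so in fact $f$ and $s$ are homotopy inverse directly. The main obstacle is to push this through for the remaining non-reflexive $\i$ allowed by Proposition~\ref{prop:order}: there the defining sequence genuinely interleaves the coordinates of $\pf$ and of $\qf$, so neither the monotonicity of $\rho$ nor the comparability of $\rho(\pf)$ with $\pf$ survives verbatim, and the chain condition defining $\preceq_\i$ is not monotone coordinatewise. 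I see two routes around this. The first is a short case analysis over the patterns in Proposition~\ref{prop:order}: in each case one picks the correct extreme coordinate and, if $\rho$ itself fails, replaces it by a zig-zag of order-preserving self-maps of the fiber that moves the coordinates of $\pf$ to one end in stages, each stage being order-preserving by a local check on two consecutive entries of the defining sequence. The second is to bootstrap from the case $\i=(1,\dots,r)$ via the inclusion of relations $\preceq'_{(1,\dots,r)}\subseteq\less$ — valid because $p_r\le q_1$ makes every interleaving a chain — together with a second application of Quillen's fiber lemma to the identity map $(P_r,\preceq'_{(1,\dots,r)})\to(P_r,\less)$, whose relevant order filters are controlled this time by the constant multichain $(p_r,\dots,p_r)$. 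In either route the precise list of admissible $\i$ from Proposition~\ref{prop:order} is exactly what is needed to make the bookkeeping close.
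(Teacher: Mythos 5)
Your reduction of the reflexive case to Theorem~\ref{teo:reflexive} is fine (and by Proposition~\ref{prop:partial} that case is just the zig-zag order), but the non-reflexive case is where the entire content of the theorem lies, and there your argument has a genuine gap. The retraction $\rho(\pf)=(p_1,\dots,p_1)$ of the fiber of $f(\pf)=p_1$ is not a closure operator: for $(p_1,\dots,p_1)$ to be $\less$-comparable with $\pf$ the definition of $\les$ forces $p_t=p_1$ for every $t$ lying in a block where the $q$-entries must dominate (resp.\ be dominated by) the $p$-entries, so $\rho(\pf)$ and $\pf$ are in general incomparable. You acknowledge this, but both proposed repairs fail to close the gap. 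The ``zig-zag of order-preserving self-maps'' is exactly the hard construction and is not exhibited. The bootstrap route rests on the inclusion $\preceq'_{(1,\dots,r)}\subseteq\less$, which is false whenever $\i$ has more than one block: $\pf\preceq_{(1,\dots,r)}\qf$ means $p_r\le q_1$, whereas $\pf\les\qf$ requires, among other things, $q_{c_1}\le p_{b_1+1}$, an inequality in the opposite direction; these are compatible only for degenerate multichains. Moreover, even granting monotonicity of $f$, the contractibility of the fibers $\{\pf:p_1\le p\}$ (which contain multichains whose remaining coordinates roam over all of $P_{\ge p_1}$) is nowhere established and is not obviously easier than the original statement.

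For comparison, the paper's proof applies Quillen's fiber lemma to a different map: the support map $g$ from the order complex of $(P_r,\less)$ to $\Delta(P)$ sending a chain of multichains to the union of all their entries. Its fibers are the multichain posets $\{q_0<\cdots<q_n\}'_r$ of \emph{chains} of $P$, and the key step is an explicit closure operator $\cl$, built from the block data $(b_i,c_i)$ of $\i$, which retracts each fiber onto a subposet isomorphic to the zig-zag multichain poset $Q_0$; since $\Delta(Q_0)$ is a subdivision of a simplex by Theorem~\ref{teo:reflexive}, the fibers are contractible. That closure operator is precisely the device your sketch is missing, and without it (or a worked-out substitute) the proof does not go through.
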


Note that in the situation of 
Theorem \ref{teo:homotopy} we have that the order complex of $P_r$ coincides with 
$\Delta(G_\i(P_r))$ for the $\i$ defining 
$\less$. In case $\les$ is neither reflexive nor transitive then 
there are examples where the conclusion of Theorem \ref{teo:homotopy} fails (see Example \ref{ex:counter}).

The paper is organized as follows. In Section \ref{sec:basic} the 
basic definitions are provided and reflexivity, antisymmetry, transitivity of the relation $\les$ are classified. In particular, Proposition \ref{prop:partial} is proved. 
In Section \ref{sec:top} we prove Theorem \ref{teo:reflexive} and
Theorem \ref{teo:homotopy}. In addition we show in Proposition \ref{prop:different} that there our constructions yields exactly $2^{r-1}$ subdivisions of $\Delta(P)$ yield. Finally, in 
Section \ref{sec:final} we 
outline to relation to the construction from M\"uhle's paper
\cite{muehle2015}, provide an example showing that the conclusion of Theorem \ref{teo:homotopy} is not always true and list a few open questions.

%\begin {figure}
%\vskip10cm
%\hskip-8cm
%\begin{picture}(0,0)%
%\includegraphics{example.pdf}%
%\end{picture}%
%\setlength{\unitlength}{3947sp}%
%
%\begingroup\makeatletter\ifx\SetFigFont\undefined%
%\gdef\SetFigFont#1#2#3#4#5{%
 % \reset@font\fontsize{#1}{#2pt}%
 % \fontfamily{#3}\fontseries{#4}\fontshape{#5}%
 % \selectfont}%
%\fi\endgroup

%\begin{picture}(0,0)(886,-4276)
%\put(750,-4270){\color{magenta}$2 \leq 2 \leq 2$}
%\put(830,-4270){\color{magenta}$2 \leq 2 \leq 3$}
%\put(920,-4270){\color{magenta}$2 \leq 3 \leq 3$}
%\put(1010,-4270){\color{magenta}$3 \leq 3 \leq 3$}
%\put(770,-4155){\color{magenta}$1 \leq 2 \leq 2$}
%\put(980,-4155){\color{magenta}$1 \leq 3 \leq 3$}
%\put(875,-4155){\color{magenta}$1 \leq 3 \leq 3$}
%\put(805,-4090){\color{magenta}$1 \leq 1 \leq 2$}
%\put(945,-4090){\color{magenta}$1 \leq 1 \leq 3$}
%\put(875,-4010){\color{magenta}$1 \leq 1 \leq 1$}
%\end{picture}%
%\caption{Example: $\Delta(G_\i)$ not homotopy equivalent to $\Delta(P)$}
%\label{fig:counter}
%\end{figure}

\section{The relation $\preceq_i$ on $r$-multichains} \label{sec:basic}

For $1\leq r\in \NN$, let $\imath: [r]\rightarrow [2r] $ be a strictly increasing map. On $P_r$ we define the binary relation $\preceq_\i$ as
follows. For $\mathfrak{p} : p_1 \leq \cdots \leq p_r$ and
$\mathfrak{q} :q_1 \leq \cdots \leq q_r$ we set:

$$\mathfrak{p}\les \mathfrak{q} :\iff
                                         \begin{array}{ll}
                                          p_t\geq q_s , & \hbox{for $s\leq \imath(t)-t$;} \\
                                          p_t\leq q_s, & \hbox{for $s>\imath(t)-t$.}
                                         \end{array}
.
$$ for $\mathfrak{p},\mathfrak{q}\in P_r$.

\begin{ej}
    \label{ex:important}
   \begin{itemize}
       \item For $r =1$ and $\i(1) = 1$ the relation 
       $\preceq_\i$ is the order
       relation $\leq$ on $P$.
       \item If $r = 2$ and $\i(1)=1, \i(2)= 4$ then 
       $\preceq_\i$ is the well studied interval order on $P$ 
       (see \cite{walker1988canonical}); that is the inclusion 
       order on
       the set of intervals $[p_1,p_2] := \{ q\in P~:~p_1 \leq q \leq p_2\}$ for $p_1 \leq p_2$ in $P$.
       \item For arbitrary $r\geq 2$ and $\i(1)=1, \i(2k)=4k,\i(2k+1)=4k+1$,
       $k=1,\ldots,\lfloor\frac{r}{2} \rfloor$, we obtain a partial order
       which appears naturally in the context of monoid algebras and is called zig-zag poset in \cite[p. 390]{peevareinersturmfels} and coincides with the interval order for $r =2$.
   \end{itemize}
\end{ej}

The relation $\preceq_\i$ usually does not even define a partial order.
For example if $r>1$ and $\i(j) = j$ for $j\in [r]$, then 
if $P$ contains a two element chain $p < q$ we have  
for $\mathfrak{p} : \underbrace{p \leq \cdots \leq p}_{r-1} < q$ that
$\mathfrak{p} \not\preceq_\i \mathfrak{p}$ and thus $\preceq_\i$ is not
reflexive.

For  a given strictly increasing map $\i: [r] \rightarrow [2r]$ we write $\{\i_1 <\cdots <\i_r\}$ for the image of $\i$. To $\i$ 
 we associate another strictly increasing map $\i^*:[r]\rightarrow [2r]$ defined as
$\i^*(k)=\i^*_k$, where $\{\i^*_1<\i^*_2<\cdots<\i^*_r\}$ is the complement of  $\{\i_1 <\i_2 <\cdots< \i_r\}$ in $[2r]$. It can be observed that $\les$ is the dual of  $\preceq_{\imath^*}$, i.e. $\pf\les \qf$ if and only if  $\qf \preceq_{\imath^*} \pf$ for all $\mathfrak{p},\mathfrak{q}\in P_r$. Therefore, for our purposes whenever it is convenient, we can restrict ourselves to strictly increasing maps $\i$ with $\i(1)=1$.

Our first goal is to classify when $\preceq_\i$ is a partial order.
The next lemma is a first step towards the classification.

\begin{lema} 
  \label{lem:simple}
  
  \begin{enumerate}
      \item The relation $\les$ is antisymmetric.
      \item The relation $\preceq_i$ is an order for all $\i$ if and only if either $r=1$ or
  $P$ is an antichain.
  \end{enumerate}
\end{lema}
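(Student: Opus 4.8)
The plan is to handle the two parts separately, with part (1) being a direct unwinding of the definition of $\les$ and part (2) following quickly from part (1) together with a reflexivity analysis.

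For part (1), suppose $\pf \les \qf$ and $\qf \les \pf$ for $\pf : p_1 \leq \cdots \leq p_r$ and $\qf : q_1 \leq \cdots \leq q_r$. The key observation is that for $t$ in the range $1 \leq t \leq r$, the index $s = t$ always satisfies exactly one of the two conditions $s \leq \i(t)-t$ or $s > \i(t)-t$; I would first note that $\i(t) - t \geq 0$ for all $t$ (since $\i$ is strictly increasing into $[2r]$ with $\i(t)\geq t$) and $\i(t)-t \leq r$ (since $\i(t) \leq 2r$... actually $\i(t)\le 2r - (r-t) = r+t$, so $\i(t)-t\le r$). So for each fixed $t$, setting $s = t$ gives either $p_t \geq q_t$ (if $t \leq \i(t)-t$) or $p_t \leq q_t$ (if $t > \i(t)-t$). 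Applying the same to $\qf \les \pf$ gives the reverse inequality between $q_t$ and $p_t$ in the same case. Hence $p_t = q_t$ for all $t$, so $\pf = \qf$. This is the clean heart of the argument and the main (mild) obstacle: making sure the diagonal index $s=t$ is genuinely available in both the ``$\geq$'' and ``$\leq$'' regimes depending on the value of $\i(t)-t$, and that at least one of the two inequalities transfers the comparison to the $t$-th coordinate.

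For part (2), the ``if'' direction is immediate: if $r = 1$ then by Example \ref{ex:important} the only option $\i(1)=1$ gives $\preceq_\i = \leq$, which is a partial order; and if $P$ is an antichain, then $P_r$ consists only of constant multichains $p \leq \cdots \leq p$, all conditions $p_t \geq q_s$, $p_t \leq q_s$ reduce to $p = q$, so $\preceq_\i$ is just equality on $P_r$, which is trivially a partial order. For the ``only if'' direction, assume $r \geq 2$ and $P$ is not an antichain, so $P$ contains a strict relation $p < q$. I would then pick a specific $\i$ for which $\preceq_\i$ fails to be reflexive, exactly as in the paragraph preceding the lemma: take $\i(j) = j$ for $j \in [r]$ (so $\i(t)-t = 0$ for all $t$, meaning every condition is of the form $p_t \leq q_s$), and consider $\mathfrak{m} : \underbrace{p \leq \cdots \leq p}_{r-1} \leq q$ (legitimate since $p < q$ and $r\ge 2$). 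Testing $\mathfrak{m} \les \mathfrak{m}$: the condition for $t = r$, $s = r$ is fine ($q \leq q$), but the condition for $t = r-1$, $s = r$ reads $p_{r-1} \leq m_r$, i.e. $p \leq q$ — also fine — whereas for $t = r$, $s \le \i(r)-r = 0$ there are no constraints; the failure is instead at $t=1,\ldots$ hmm. Actually the relevant failing instance: since $\i(t)-t=0$, all pairs require $p_t \le q_s$; taking $t = r$, $s = 1$ we need $m_r \leq m_1$, i.e. $q \leq p$, which is false. Hence $\mathfrak{m} \not\les \mathfrak{m}$, so $\preceq_\i$ is not reflexive, hence not a partial order. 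Since part (1) already gives antisymmetry unconditionally, the content here is purely the reflexivity obstruction, and the only care needed is choosing the witness multichain and the index pair $(t,s)$ that breaks reflexivity for this particular $\i$.
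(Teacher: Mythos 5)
Your proposal is correct and follows essentially the same route as the paper: antisymmetry via specializing to the diagonal index $s=t$ (which forces $p_t\leq q_t$ or $p_t\geq q_t$ according to whether $\i(t)<2t$ or $\i(t)\geq 2t$), and part (2) via the observation that an antichain yields only constant multichains together with the reflexivity counterexample $\i(j)=j$ applied to $p\leq\cdots\leq p<q$. The only cosmetic difference is that you spell out the failing index pair $(t,s)=(r,1)$ explicitly, which the paper leaves implicit.
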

\begin{proof}
   \begin{enumerate}
       \item Let $\mathfrak{p} \preceq_\i \mathfrak{q}$. If
       $\i(j) \leq 2j-1$ then $p_j \leq q_j$ and if
       $\i(j) \geq 2j-1$ then $q_j \leq p_j$. If in 
       addition $\qf\les\pf$ then the fact that $\leq$ is an 
       order relation implies that $p_j=q_j$ for all $j \in [r]$ and hence $\pf=\qf$.
       \item For $r=1$, by $\i(1) =1$ we have $P_1=P$ and $\les=\leq$.
       Thus we may assume $r > 1$.
       
       If $P$ is antichain, then $P_r$ consists of  $r$-multichains of the form 
       $\pf: p\leq\cdots \leq p$ for $p \in P$. From the
       definition of $\preceq_\i$ it follows that $P_r \cong P$ is an antichain as well. 
       
       Conversely, assume $\les$ is an order relation of all $\i$.
       Then the example right before this lemma shows that $P$
       must be an antichain.
   \end{enumerate}
\end{proof}

 Let $\i:[r]\rightarrow [2r]$ with $\i(1)=1$. Then one can write the image of $\i$ as a union of blocks: $\i([r])= B_1\cup \cdots\cup B_\ell$, where each  $B_i$ is an interval of $b_i$ consecutive numbers and $\max B_i+1<\min B_{i+1}$. Setting $b_0=0$, it follows that $B_i=\{\i(b_1+\cdots +b_{i-1}+1),\ldots,\i(b_1+\cdots+ b_i)\}$.
 Analogously, we can write 
 the image of $\i^*$ as a union of blocks $C_1\cup \cdots\cup C_{\ell'}$, where each $C_i$ is an interval of $c_i$ consecutive numbers and $\max C_i+1 < \min C_{i+1}$. We have $\ell' = \ell-1$ if
 $2r$ is in the image of $\i$ and $\ell' = \ell$ otherwise.
 Setting $c_0 = 0$, it follows that 
 $$C_i=\{j\,: \, \i(b_1+\cdots+b_{i})<j<\i(b_1+\cdots+b_i+1)\}=\{\i^*(c_1+\cdots+c_{i-1}+1),\ldots,\i^*( c_1+\cdots+c_{i})\}.$$
 In particular,  we have
 \begin{align*}
   \i(t)  & = t+c_1+\cdots +c_{i-1} \text{ if } t \in B_i \\
   \i^*(t) & = t+b_1+\cdots+b_{i} \text{ if } t\in C_i.
 \end{align*}
For $\ell = \ell'$ the relation $\pf\les \qf$ is then equivalent to the existence of the
following multichain of length $2r$ in $P$ 
          $$\begin{array}{c}
             p_{1}\leq\cdots\leq p_{b_1}\leq  q_{1}\leq\cdots\leq q_{c_1}\leq p_{b_1+1} \leq \cdots \leq p_{b_1+b_2} \leq \cdots\\
\leq  p_{b_1+\cdots+b_{\ell-1}+1}\leq\cdots\leq p_{b_1+\cdots+b_\ell}\leq q_{c_1+\cdots+c_{\ell-1}+1}\leq\cdots\leq q_{c_1+\cdots+c_\ell}
          \end{array}$$

For $\ell' = \ell-1$ then we get the same condition except that the above chain ends at $p_{b_1+\cdots+b_\ell}$.

\begin{lema}
  \label{lem:order}
  Assume $r>1$ and $P$ contains a chain of length one.
  \begin{enumerate}
    \item The relation $\les$ is reflexive if and only if  $\i(t) \in \{2t,2t-1\}$ for all $t\in [r]$.
    \item The relation $\les$ is transitive  if and only if $b_i$ and $c_i$  satisfy the following relations:
  \begin{equation}\label{bc}
    \begin{array}{c}
     b_1<c_1<b_1+b_2<c_1+c_2<\cdots  \\
    < b_1+\cdots+b_{l-1}<c_1+\cdots +c_{\ell-1}\leq b_1+\cdots+b_{\ell}\leq c_1+\cdots +c_{\ell}  
    \end{array}
    \end{equation}
    
  \end{enumerate}
\end{lema}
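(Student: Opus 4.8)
The plan is to route everything through the weakly increasing map $f\colon\{0,1,\dots,r+1\}\to\{0,1,\dots,r\}$ defined by $f(0):=0$, $f(t):=\i(t)-t$ for $t\in[r]$, and $f(r+1):=r$; this is well defined and weakly increasing because $\i$ is strictly increasing into $[2r]$ with $\i(1)=1$, so that $t\le\i(t)\le r+t$. The one fact I would establish at the outset, straight from the definition of $\preceq_\i$ and the fact that the entries of a multichain form a chain, is the pointwise reformulation
\begin{equation}\label{eq:reform}
 \pf\les\qf\iff q_{f(t)}\le p_t\le q_{f(t)+1}\ \text{ for every }t\in[r],
\end{equation}
where $q_0$ is read as a formal minimum and $q_{r+1}$ as a formal maximum: the family of inequalities $p_t\ge q_s$ ($s\le f(t)$) collapses to $q_{f(t)}\le p_t$ since $f(t)\le r$ and the $q$'s increase, and dually for $p_t\le q_s$ ($s>f(t)$).

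\emph{Part (1).} Specializing \eqref{eq:reform} to $\qf=\pf$, reflexivity of $\preceq_\i$ says $p_{f(t)}\le p_t$ and $p_t\le p_{f(t)+1}$ hold for every $t$ in every multichain of $P$. As entries increase, $p_{f(t)}\le p_t$ holds in all multichains exactly when $f(t)\le t$ and $p_t\le p_{f(t)+1}$ exactly when $f(t)\ge t-1$: the sufficiency is immediate, and for necessity I would use a two-element chain $a<b$ of $P$ and the multichain whose first $t$ (resp.\ $f(t)+1$) entries equal $a$ and whose remaining entries equal $b$, which violates the relevant relation when $f(t)>t$ (resp.\ when $f(t)<t-1$). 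Hence $\preceq_\i$ is reflexive iff $f(t)\in\{t-1,t\}$, i.e.\ $\i(t)\in\{2t-1,2t\}$, for all $t$.

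\emph{Part (2), first reduction.} Next I would prove that $\preceq_\i$ is transitive if and only if
\[
(\mathrm{T1})\ f(f(t))\ge f(t)\qquad\text{and}\qquad(\mathrm{T2})\ f(f(t)+1)\le f(t)\qquad\text{for all }t\in[r].
\]
For ``$\Leftarrow$'': if $\pf\les\qf\les\ssf$, apply \eqref{eq:reform} to $\qf\les\ssf$ at the indices $f(t)$ and $f(t)+1$ (when these lie in $[r]$; otherwise the relevant bound is a formal min/max and the assertion is automatic) to get $s_{f(f(t))}\le q_{f(t)}$ and $q_{f(t)+1}\le s_{f(f(t)+1)+1}$; combining with $q_{f(t)}\le p_t\le q_{f(t)+1}$ from $\pf\les\qf$, and then using (T1), (T2) and monotonicity of $f$, yields $s_{f(t)}\le p_t\le s_{f(t)+1}$, i.e.\ $\pf\les\ssf$ by \eqref{eq:reform}. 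For ``$\Rightarrow$'': transitivity of $\preceq_\i$ restricts to the sub-family of multichains all of whose entries lie in a fixed chain $\{a<b\}$ of $P$ (which exists by hypothesis); identifying such a multichain with its number $k$ of entries equal to $a$, \eqref{eq:reform} shows this restriction is the relation $R$ on $\{0,\dots,r\}$ given by $k\mathrel{R}m\iff f(k)\le m\le f(k+1)$, and feeding the triples $(k,f(k),f(f(k)))$ and $(k-1,f(k),f(f(k)+1))$ through the transitivity of $R$ produces (T1) and (T2) respectively.

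\emph{Part (2), second reduction.} It remains to verify $(\mathrm{T1})\wedge(\mathrm{T2})\iff(\ref{bc})$. Writing $\beta_j:=b_1+\dots+b_j$ and $\gamma_j:=c_1+\dots+c_j$, the block description preceding the lemma gives $f\equiv\gamma_{i-1}$ on $B_i$, so the values of $f$ on $\{0,\dots,r\}$ are exactly $\gamma_0=0<\gamma_1<\dots<\gamma_{\ell-1}$, and $f(\gamma_{i-1})=\gamma_{m-1}$, $f(\gamma_{i-1}+1)=\gamma_{m'-1}$ with $m$ (resp.\ $m'$) least such that $\beta_m\ge\gamma_{i-1}$ (resp.\ $\beta_{m'}\ge\gamma_{i-1}+1$). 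Evaluating (T1) and (T2) at each value $\gamma_{i-1}$ of $f$ turns them into $\beta_{i-1}<\gamma_{i-1}$ and $\gamma_{i-1}<\beta_i$ at the indices $i$ where these are not automatically forced ($i=1$ is forced because $\gamma_0=0$, and $i=\ell$ because $\gamma_{\ell-1}\le r=\beta_\ell$, including the degenerate case $\i(r)=2r$, i.e.\ $\ell'=\ell-1$, in which $\gamma_{\ell-1}=r$). After reindexing, these are precisely the strict inequalities $\beta_j<\gamma_j$ $(1\le j\le\ell-1)$ and $\gamma_j<\beta_{j+1}$ $(1\le j\le\ell-2)$ occurring in (\ref{bc}); the two remaining weak inequalities of (\ref{bc}) are trivial since $\beta_\ell=r$ and the right-hand end of (\ref{bc}) also equals $r$. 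Combining the two reductions proves (2). The step I expect to need the most care is this last one: determining exactly which instances of (T1)/(T2) carry content versus being vacuous, and handling the boundary case $\ell'=\ell-1$, where several of the inequalities collapse.
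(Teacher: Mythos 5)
Your proposal is correct, and while part (1) follows essentially the paper's own argument (the same test multichains on a two-element chain $a<b$ force $f(t)=\i(t)-t\in\{t-1,t\}$), your treatment of part (2) takes a genuinely different and in fact more complete route. The paper proves only the ``only if'' direction, by exhibiting two explicit families of triples $\pf\les\qf\les\ssf$ with $\pf\not\preceq_\i\ssf$ (one for each way \eqref{bc} can fail), and dismisses the converse with ``follows from the definition.'' You instead package the relation as the two-sided sandwich $q_{f(t)}\le p_t\le q_{f(t)+1}$ and characterize transitivity by the functional inequalities $f(f(t))\ge f(t)$ and $f(f(t)+1)\le f(t)$; I checked both directions of that equivalence (the boundary conventions $f(0)=0$, $f(r+1)=r$ correctly render the extremal instances vacuous), the faithfulness of the restriction to the model relation $k\mathrel{R}m\iff f(k)\le m\le f(k+1)$ on $\{0,\dots,r\}$, and the translation into \eqref{bc} via $f\equiv c_1+\cdots+c_{i-1}$ on the $i$\textsuperscript{th} domain block, including the degenerate case $2r\in\i([r])$ where the trailing weak inequalities of \eqref{bc} collapse to $r\le r$. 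This buys two things the paper does not supply: an actual proof of the ``if'' direction, and a uniform derivation of the ``only if'' direction in which the paper's two counterexample families reappear as the failures of (T1) and (T2) on $R$. The only cosmetic caveat is that you write $f\equiv\gamma_{i-1}$ ``on $B_i$'' where $B_i$ in the paper's notation is a block of \emph{values} of $\i$; you mean the corresponding block $\{\beta_{i-1}+1,\dots,\beta_i\}$ of the domain, which is clear from context but worth stating.
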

\begin{proof}
  \begin{enumerate}
    \item  Assume $\preceq_\i$ is reflexive. 
    Let  $t \in [r] \setminus \{1\}$  and let $p<q$ be a two element chain in $P$.  Consider the multichain $\mathfrak{p} : p=p_1\leq \cdots \leq p=p_{t}<q=p_{t+1}\leq \cdots \leq q=p_r$.
    By reflexivity we have $\mathfrak{p} \preceq_\i \mathfrak{p}$.
    Thus $p_{t-1}\leq p_t$ implies that $t-1\leq \i(t)-t$ and  $p_t< p_{t+1}$ implies that $t+1>\i(t)-t$. It follows that $2t-1\leq \i(t)<2t+1$ for all $t\geq 2$.
    
    Conversely, suppose that $\i(1)=1$ and for $t>1$,  $i(t)=2t-1$ or $2t$. Then $b_i\leq 2$ and $c_i\leq 2$ for all $i$. It can easily be verified that $\pf\les \pf$ for all $\pf \in P_r$.
    
    \item 
Assume that \eqref{bc} does not hold.
 Let $1\leq k<\ell$ be the minimal index such that  $b_1+\cdots+b_k\geq  c_1+\cdots+c_k$.  Set
$$\pf: \underbrace{p\leq \cdots\leq p}_{b_1+\cdots+b_k+1}\leq \underbrace{q\leq \cdots \leq q}_{r-b_1-\cdots-b_k-1},~  \qf: \underbrace{p\leq \cdots\leq p}_{c_1+\cdots+c_k}\leq \underbrace{q\leq \cdots \leq q}_{r-c_1-\cdots-c_k},~ \ssf: \underbrace{p\leq \cdots\leq p}_{c_1+\cdots+c_{k-1}}\leq \underbrace{q\leq \cdots \leq q}_{r-c_1-\cdots-c_{k-1}}.$$ Then by definition we have $\pf\les \qf$ and $\qf\les \ssf$ but $\pf\npreceq_{\i}\ssf$.

 Let $1< k<\ell-1$ be the smallest index such that  $c_1+\cdots+c_{k}\geq b_1+\cdots+b_{k+1}$.  Set
$$\pf: \underbrace{p\leq \cdots\leq p}_{b_1+\cdots+b_{k}}\leq \underbrace{q\leq \cdots \leq q}_{r-b_1-\cdots-b_k},~  \qf: \underbrace{p\leq \cdots\leq p}_{b_1+\cdots+b_{k+1}}\leq \underbrace{q\leq \cdots \leq q}_{r-b_1-\cdots-b_{k+1}},~ \ssf: \underbrace{p\leq \cdots\leq p}_{c_1+\cdots+c_{k}+1}\leq \underbrace{q\leq \cdots \leq q}_{r-c_1-\cdots-c_k-1}.$$ Then again by definition we have  $\pf\les \qf$ and $\qf\les \ssf$ but $\pf\npreceq_{\i}\ssf$.

The converse follows from the definition of $\les$ and conditions \eqref{bc}.
  \end{enumerate}
\end{proof}

Proposition \ref{prop:partial} is now an immediate consequence of Lemma \ref{lem:simple} (1) and Lemma \ref{lem:order} (1), (2). Note that in case $\les$ is a partial order then $b_1$=1, $c_1=2$, $c_i=b_i=2$ 
for all $i<\ell$ and $b_\ell=1$, $c_\ell=0$ if $r$ is even and $b_\ell=2$, $c_\ell=1$ if $r$ is odd.

The classification of when $\preceq_\i'$ is a partial order is also an immediate consequence of
Lemma \ref{lem:simple} (1) and Lemma \ref{lem:order} (2).

\begin{prop}
  \label{prop:order}
  The relation $\preceq_\i'$ is a partial order on $P_r$ if and only if
  \eqref{bc} holds.
\end{prop}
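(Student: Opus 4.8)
The plan is to deduce Proposition \ref{prop:order} directly from the pieces already assembled in this section, essentially by repackaging the antisymmetry statement of Lemma \ref{lem:simple}(1) together with the transitivity criterion of Lemma \ref{lem:order}(2), after checking that reflexivity is automatic for $\preceq_\i'$. First I would observe that, by construction, $\preceq_\i'$ is always reflexive: we have $\pf \preceq_\i' \pf$ by the first clause of its defining disjunction, with no hypothesis on $\i$. So the only two properties left to characterize are antisymmetry and transitivity.

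Next I would handle antisymmetry. Suppose $\pf \preceq_\i' \qf$ and $\qf \preceq_\i' \pf$ with $\pf \neq \qf$. Then by definition of $\preceq_\i'$ both of the ``$\pf\neq\qf$'' clauses hold, i.e. $\pf \preceq_\i \qf$ and $\qf \preceq_\i \pf$; but Lemma \ref{lem:simple}(1) says $\preceq_\i$ is antisymmetric, forcing $\pf = \qf$, a contradiction. Hence $\preceq_\i'$ is antisymmetric for every strictly increasing $\i$. So antisymmetry, like reflexivity, imposes no condition on $\i$, and the entire content of the proposition is concentrated in the transitivity condition.

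For transitivity I would argue that $\preceq_\i'$ is transitive if and only if $\preceq_\i$ satisfies the ``transitivity up to the diagonal'' statement needed, and then match this with Lemma \ref{lem:order}(2). Concretely: if \eqref{bc} holds then $\preceq_\i$ is transitive by Lemma \ref{lem:order}(2), and a relation obtained from a transitive relation by adjoining the diagonal is still transitive (a routine case check on whether the intermediate or outer steps are diagonal steps), so $\preceq_\i'$ is transitive. Conversely, if $\preceq_\i'$ is transitive, I would revisit the two witness triples $\pf, \qf, \ssf$ constructed in the proof of Lemma \ref{lem:order}(2) that violate transitivity of $\preceq_\i$ when \eqref{bc} fails; since in those triples $\pf, \qf, \ssf$ are pairwise distinct multichains (they have different numbers of copies of $p$, using that $p < q$ in $P$, which is available since we are in the regime $r>1$ and $P$ has a chain of length one — and if $P$ has no chain of length one then $P$ is an antichain, $P_r \cong P$, and \eqref{bc} is vacuously satisfiable, so there is nothing to prove), the relations $\pf \preceq_\i \qf$, $\qf \preceq_\i \ssf$, $\pf \not\preceq_\i \ssf$ upgrade to $\pf \preceq_\i' \qf$, $\qf \preceq_\i' \ssf$, $\pf \not\preceq_\i' \ssf$, contradicting transitivity of $\preceq_\i'$. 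Thus \eqref{bc} must hold.

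The main obstacle, such as it is, is the bookkeeping in the converse direction: one must be careful that the counterexample multichains from Lemma \ref{lem:order}(2) are genuinely distinct as elements of $P_r$ (so that passing to $\preceq_\i'$ does not accidentally repair the violation via a diagonal step), and that the degenerate case where $P$ has no two-element chain is correctly disposed of separately. Once those points are in place, the proposition follows by combining reflexivity (free), antisymmetry (Lemma \ref{lem:simple}(1)), and transitivity (Lemma \ref{lem:order}(2)), exactly as the sentence preceding the statement already anticipates.
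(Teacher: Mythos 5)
Your proof is correct and follows essentially the same route as the paper, which simply records Proposition \ref{prop:order} as an immediate consequence of Lemma \ref{lem:simple}(1) (antisymmetry for free) and Lemma \ref{lem:order}(2) (transitivity iff \eqref{bc}), with reflexivity built into the definition of $\preceq_\i'$. Your extra check that the witness triples from Lemma \ref{lem:order}(2) consist of pairwise distinct multichains, so that adjoining the diagonal cannot repair the transitivity failure, is exactly the point the paper leaves implicit.
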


%For a given $\imath[r]\rightarrow [2r]$, define the map $\imath^*:[r]\rightarrow [2r]$ as:
%$$\imath^*(t)=\left\{
%                  \begin{array}{ll}
%                  2t, & \hbox{if $\imath(t)=2t-1$;} \\
%                  2t-1, & \hbox{if $\imath(t)=2t$.}
%                  \end{array}
% \right.
% $$
%
%   Thus we have $2^{r-1}$ reflexive orders $\les$ on  $\PP$.  Fix $\imath(1)=1$ for each reflexive orders $\les$.
\begin{ej}
   When $r=2$, there are  only two possibilities for the map $\i$. For $\i(2)=3$, the relation $\les$ is reflexive and  for $\i(2)=4$, it is a partial order.
   In the former case, the clique complex of $G_\i(P_2)$ is the $2$\textsuperscript{nd}-edgewise subdivision of the order complex of $\Delta(P)$. In the latter case, 
   $\preceq_\i$ defines the interval order on $P_r$.  From Walker's result \cite{walker1988canonical} we know that its order complex is homeomorphic to the order complex of $P$.
   
   See Figure \ref{fig 1} for the case $P = \{1 < 2 < 3 \}$ and 
   Example \ref{ex:general} for the extension of both cases to larger $r$.
\end{ej}

\begin{figure}
  \centering
  \includegraphics[width=0.8\textwidth]{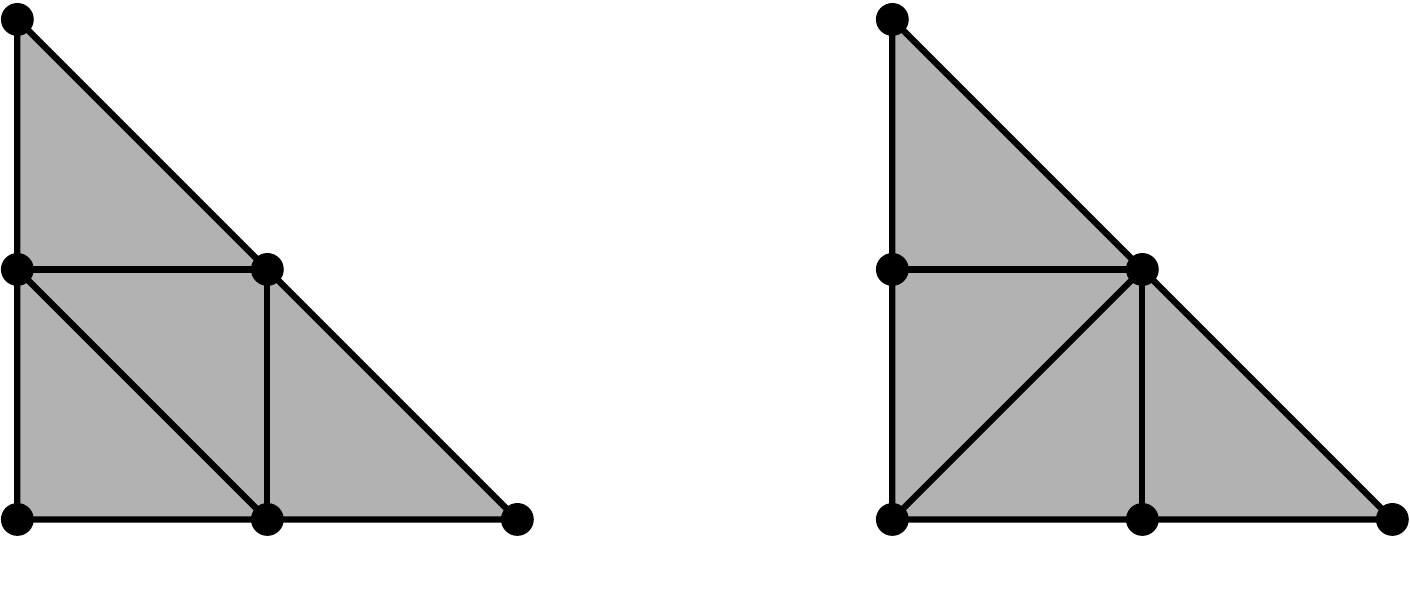}
  \begin{picture}(0,0)(1000,1000)
    \put(670,970){\makebox(0,0)[lb]{(a)~$\i(1) = 1, \i(2) = 3$}}
    \put(890,970){\makebox(0,0)[lb]{(b)~$\i(1) = 1, \i(2) = 4$}}
    \put(920,1000){\makebox(0,0)[lb]{$1\leq 2$}}
    \put(980,1000){\makebox(0,0)[lb]{$1\leq 1$}}
    \put(860,1000){\makebox(0,0)[lb]{$2\leq 2$}}
    \put(705,1000){\makebox(0,0)[lb]{$1\leq 2$}}
    \put(765,1000){\makebox(0,0)[lb]{$1\leq 1$}}
    \put(645,1000){\makebox(0,0)[lb]{$2\leq 2$}}
    \put(625,1140){\makebox(0,0)[lb]{$3\leq 3$}}
    \put(838,1140){\makebox(0,0)[lb]{$3\leq 3$}}
    \put(625,1080){\makebox(0,0)[lb]{$2\leq 3$}}
    \put(838,1080){\makebox(0,0)[lb]{$2\leq 3$}}
    \put(725,1080){\makebox(0,0)[lb]{$1\leq 3$}}
    \put(938,1080){\makebox(0,0)[lb]{$1\leq 3$}}
  \end{picture}%
  \vskip1cm
\caption{$\Delta(G_\i(P_2))$ for $P=\{1<2<3\}$}\label{fig 1}
\end{figure}
%\begin{ej}
%Let $\D$ be a simplicial complex and let $P=\D\setminus \{\emptyset\}$ be the poset under the inclusion.
%  \begin{itemize}
%    \item If $\i:[r]\rightarrow [2r]$ is defined as $\i(t)=2t-1$. Then the clique complex of $\PP$ is the $r$-color barycentric subdivision of $\D$.
%    \item Let $\i$ be such that $\PP$ is a poset. Then for $r=2$, the order complex $\D(\PP)$ is the interval subdivision of $\D$.
%
%  \end{itemize}
%\end{ej}

\section{Topology of Complexes of Multichains} \label{sec:top}

 In the first part of this section we will prove Theorem \ref{teo:reflexive}. First note that if $P$ is an antichain then by Lemma \ref{lem:simple}(2) $\preceq_\i$ is reflexive for all $\i$. In this situation $P_r$ is an antichain as well and therefore the assertions of Theorem \ref{teo:reflexive} hold.
 
 Hence we may assume that $P$ contains at least a chain of length one. Therefore $\preceq_\i$ is reflexive 
 if and only satisfies the conditions of Lemma \ref{lem:order} (1).
 We first show that in this case 
 $\D(G_{\i}(P_r))$ is a subdivision of $\D(P)$.

\begin{prop}\label{sub}
  Let $P$ be a poset which is not an antichain then $\D(G_{\i}(P_r))$ is a subdivision of $\D(P)$ for every strictly increasing map $\i:[r]\rightarrow [2r]$ for which $\i(1)=1$ and  $\les$ is reflexive.
\end{prop}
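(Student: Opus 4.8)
The plan is to realise the required homeomorphism by an explicit piecewise‑linear map. View $|\D(P)|$ inside the real vector space with basis $\{e_p:p\in P\}$, so that the faces of $\D(P)$ are the convex hulls of the chains of $P$. To an $r$-multichain $\pf:p_1\le\cdots\le p_r$ assign the point
$$v_\pf:=\tfrac1r\bigl(e_{p_1}+\cdots+e_{p_r}\bigr),$$
which lies in $|\D(P)|$ because $\{p_1,\dots,p_r\}$ is a chain and hence a face of $\D(P)$. Extending affinely on each simplex gives a continuous map $\phi$ on $|\D(G_\i(P_r))|$; for $\phi$ to take values in $|\D(P)|$ it is necessary and sufficient that for every clique $A$ of $G_\i(P_r)$ the union $\bigcup_{\pf\in A}\{p_1,\dots,p_r\}$ is a chain of $P$, for then $\{v_\pf:\pf\in A\}$ spans a face of $\D(P)$. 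This in turn follows from the length‑$2r$ multichain description of $\les$ recalled just before Lemma \ref{lem:order}: if $\pf\les\qf$, that description places all of $p_1,\dots,p_r,q_1,\dots,q_r$ on one chain, so running over all pairs of a clique gives pairwise comparability of the whole union. The goal is then to prove that $\phi$ is a homeomorphism which is affine and injective on every simplex and carries it onto a geometric simplex inside a face of $\D(P)$ — which is exactly the assertion that $\D(G_\i(P_r))$ is a geometric subdivision of $\D(P)$.

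The second step is to localise to a single simplex of $\D(P)$. Since $\les$ refers only to order relations among the entries, and those are the same inside a chain $\sigma$ of $P$ as in $P$, the subgraph of $G_\i(P_r)$ induced on the set $\sigma_r\subseteq P_r$ of $r$-multichains from $\sigma$ is precisely $G_\i(\sigma_r)$, and hence $\D(G_\i(P_r))=\bigcup_\sigma\D(G_\i(\sigma_r))$, the union over the maximal chains $\sigma$ of $P$, glued along the subcomplexes $\D(G_\i(\tau_r))$ for common faces $\tau$. For a fixed chain $\sigma=\{v_0<\cdots<v_m\}$ the points $v_\pf$ with $\pf\in\sigma_r$ are exactly the points $\tfrac1r(a_0,\dots,a_m)$ with $a_i\in\ZZ_{\ge0}$ and $\sum_i a_i=r$, i.e.\ the $\tfrac1r$-lattice points of the simplex $|\sigma|\cong\Delta^m$. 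I claim it suffices to show that for every chain $\sigma$ of $P$ the restriction $\phi\colon|\D(G_\i(\sigma_r))|\to|\sigma|$ is a bijection. Indeed, granting this: if a point $x$ of a simplex of $\D(G_\i(P_r))$ has $\phi(x)$ in a face $\tau$ of $\D(P)$, then every multichain occurring with positive coefficient in $x$ has its underlying chain inside $\tau$, so $x\in|\D(G_\i(\tau_r))|$; applying the chain statement to $\tau$ yields global injectivity of $\phi$, applying it to the maximal chains yields surjectivity, and a continuous bijection between compact Hausdorff spaces is a homeomorphism. Being affine and injective on each simplex, $\phi$ sends the vertices of a simplex to affinely independent points, so each $\phi(|A|)$ is a genuine geometric simplex contained in the face of $\D(P)$ spanned by the underlying chain of $A$; injectivity makes the family $\{\phi(|A|)\}$ meet face to face, and the per‑chain bijectivity makes it cover $|\D(P)|$. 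Thus $\{\phi(|A|)\}$ is a simplicial subdivision of $\D(P)$.

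What remains — and this is where I expect the real work to be — is the chain case: proving that, for $\sigma=\{v_0<\cdots<v_m\}$, the clique complex of $G_\i(\sigma_r)$ with the vertex placement $\pf\mapsto v_\pf$ is a triangulation of $\Delta^m$. Here reflexivity of $\les$ enters through Lemma \ref{lem:order}(1): one has $\i(t)=2t-1+\epsilon_t$ with $\epsilon_1=0$ and $\epsilon_t\in\{0,1\}$, and the length‑$2r$ description of $\les$ turns into the interleaving condition $q_{t-1+\epsilon_t}\le p_t\le q_{t+\epsilon_t}$ for all $t$ (with $q_0$ and $q_{r+1}$ read as $\mp\infty$). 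The route I would take is to construct the inverse of $\phi$ directly, generalising the classical canonical homeomorphism for the $r$\textsuperscript{th} edgewise subdivision: writing a point $x\in\Delta^m$ through the partial sums $d_i=x_0+\cdots+x_{i-1}$ of its barycentric coordinates, and a multichain $\pf\in\sigma_r$ through the step function $d^{\pf}_i=\tfrac1r\#\{t:p_t<v_i\}$, one reads off the clique carrying $x$ and the barycentric weights of $x$ in it from the integer parts $\lfloor r\,d_i\rfloor$ together with the order in which the coordinates of $x$ accumulate, resolving coincidences according to the string $(\epsilon_t)$. The substantive points to verify are that this recipe always outputs a clique of $G_\i(\sigma_r)$ with strictly positive barycentric weights summing to $1$, and that it is a two‑sided inverse to $\phi$; once these are checked, the proposition follows by assembling the pieces as above. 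An alternative is induction on $m$, with the boundary of $\Delta^m$ triangulated by the chain cases of its facets, but the inductive step needs extra care to control the interior, so the explicit inverse seems the cleaner option.
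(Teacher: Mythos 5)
Your overall strategy is exactly the one the paper follows: the averaging map $\pf\mapsto\tfrac1r(p_1+\cdots+p_r)$, the observation (via the length-$2r$ multichain reformulation of $\les$) that cliques of $G_\i(P_r)$ have underlying sets that are chains so the affine extension lands in $|\D(P)|$, the reduction to a single chain/open simplex, and the plan of producing an explicit inverse governed by the partial sums of the barycentric coordinates and the string $\epsilon_t\in\{0,1\}$ with $\i(t)=2t-1+\epsilon_t$. Your gluing argument for assembling the per-chain statements into global injectivity, surjectivity and the face-to-face property is correct and is essentially what the paper does when it passes from Lemma \ref{lem:help} to the Proposition.

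However, the proposal stops exactly where the real content begins, and you say so yourself. The paper's Lemma \ref{lem:help} is not a routine verification: given $z=\sum\lambda_i p_i$ in the open simplex of a chain, one must write down the fractional quantities $\al(\ell)$ (whose definition genuinely depends on whether $\i(i)=2i-1$ or $\i(i)=2i$ on the relevant index, with a separate boundary convention when $r(\lambda_1+\cdots+\lambda_\ell)$ is an integer), sort them into $\al_1<\cdots<\al_m$, build the matrix $(q_{i,j})$, check that its columns are $r$-multichains forming a chain under $\prec_\i$, and then verify in four separate cases (according to the $\epsilon$-values at the two indices bracketing $r(\lambda_1+\cdots+\lambda_k)$) that the coefficient of each $p_k$ in the image of the proposed preimage really is $\lambda_k$. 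Your sketch (``read off the clique from $\lfloor r\,d_i\rfloor$ and the accumulation order, resolving coincidences according to $(\epsilon_t)$'') names the right ingredients but does not specify the tie-breaking rule, does not show the output is a clique with positive weights summing to $1$, and does not perform the coefficient check; without these the claimed bijectivity on each chain is unproved, and hence so is the Proposition. So the approach is the paper's approach, but the proof is incomplete precisely at its technical core.
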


In order to prove Proposition \ref{sub} we define the map $f : P_r \rightarrow 
|\D(P)|$ by 
$f(p_1 \leq \cdots \leq p_r)=\frac{1}{r}(p_1+p_2+\cdots+ p_r)$.
Assume $\preceq_\i$ is reflexive. Observe that  if $\pf_1,\ldots, 
\pf_\ell \in P_r$ forms a clique in 
$G_\i(P_r)$ then the union the elements of 
$\pf_1,\ldots,\pf_\ell$ is a chain in 
$P$. Thus we can extend the map
$f$ affinely to a piecewise linear, continuous map $|f|:|\D(G_{\i}(P_r))|\rightarrow |\D(P)|$.
For a chain $\cf : p_1 < \cdots < p_n$ in $P$ let $|\D(G_{\i}(P_r))|_\cf$ be the
set of a $y \in |\D(G_{\i}(P_r))|$ such that if $y = \sum_{i=1}^\ell \mu_i \pf_i$
in strictly positive barycentric coordinates then $\pf_1 \cup \cdots \cup \pf_\ell = \cf$. Building on ideas from \cite[Section 2]{edelsbrunner2000edgewise} we prove the following lemma.

\begin{lema} \label{lem:help}
  For a chain $\cf : p_1 < \cdots < p_n$ in $P$ the restriction of 
  $|f|$ to $|\D(G_{\i}(P_r))|_\cf$ is a homeomorphism to the interior of the simplex spanned by $\cf$ in $|\Delta(P)|$.
\end{lema}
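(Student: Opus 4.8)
The plan is to exhibit an explicit piecewise-linear inverse of $|f|$ restricted to $|\D(G_{\i}(P_r))|_\cf$, following the strategy used by Edelsbrunner and Grayson for the edgewise subdivision. Fix the chain $\cf : p_1 < \cdots < p_n$ and let $\sigma_\cf \subseteq |\Delta(P)|$ be the open simplex it spans. A point in $\sigma_\cf$ is given by a vector $(\mu_1,\dots,\mu_n)$ of strictly positive barycentric coordinates summing to $1$, i.e. a point of the open simplex; equivalently it corresponds to a strictly increasing ``partial-sum'' sequence $0 = \nu_0 < \nu_1 < \cdots < \nu_{n-1} < \nu_n = 1$ with $\nu_k = \mu_1 + \cdots + \mu_k$. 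On the source side, a point $y \in |\D(G_{\i}(P_r))|_\cf$ is $\sum_{i=1}^\ell \lambda_i \pf_i$ with $\lambda_i > 0$, $\sum \lambda_i = 1$, and $\pf_1 \cup \cdots \cup \pf_\ell = \cf$; the image $|f|(y)$ has barycentric coordinate of $p_k$ equal to $\tfrac1r$ times the total multiplicity with which $p_k$ appears across the $\pf_i$, weighted by the $\lambda_i$. The first step is to record this explicitly: if we encode each $\pf_i : p_{j_1} \leq \cdots \leq p_{j_r}$ by its own ``cumulative counting function'' $a_i : \{0,1,\dots,r\} \to \{0,1,\dots,n\}$ (so $a_i(s)$ is the index of $p_{j_s}$, with $a_i(0)=0$ scaled appropriately), then $|f|(y)$ reads off the averaged step function $\tfrac1r \sum_i \lambda_i a_i$, and the key observation is that the clique condition forces all the $a_i$ to be pairwise comparable as integer-valued monotone step functions.

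Next I would build the inverse map. Given a target point with partial sums $0 = r\nu_0 < r\nu_1 < \cdots < r\nu_{n} = r$ (rescaled to lie in $[0,r]$), I want to write the constant-slope-$1$ path that realizes it as a unique convex combination of the lattice ``staircase'' paths $a_i$ that are available inside $|\D(G_{\i}(P_r))|_\cf$. This is exactly the statement that the order-$r$ dilation of the standard simplex is tiled by the images under $f$ of the simplices of the relevant subdivision, and the combinatorics of which staircases are allowed is governed by $\i$: the condition that $\preceq_\i$ is reflexive (Lemma \ref{lem:order}(1), so $\i(t) \in \{2t-1,2t\}$) is what guarantees that consecutive staircases differ in a way compatible with forming a clique, so that the decomposition is both possible and unique. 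Concretely, the inverse sends $(\nu_k)$ to the point whose barycentric coordinates on the $\pf_i$ are the lengths of the maximal subintervals of $[0,r]$ on which the ``rounding'' of the piecewise-linear path $t \mapsto$ (path through the $r\nu_k$) to the integer lattice is a fixed staircase; continuity and piecewise-linearity of this assignment, and the fact that it lands in $|\D(G_{\i}(P_r))|_\cf$, are then checked directly from the formulas. I would then verify that the two maps are mutually inverse by a short computation: averaging the staircases recovers the partial sums, and decomposing the averaged path recovers the weights.

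The main obstacle I anticipate is not the bookkeeping with staircases per se but pinning down precisely \emph{which} set of staircases $\{a_i\}$ forms a clique in $G_\i(P_r)$ over the chain $\cf$, and showing that this set is exactly the set of integer monotone paths ``sandwiched'' under the dilated simplex in the way that the edgewise-subdivision argument needs. This is where the hypothesis enters essentially: one must translate reflexivity of $\preceq_\i$ into the statement that the relation $\preceq_\i$ restricted to multichains supported on a fixed chain of $P$ agrees (up to the $\i \leftrightarrow \i^*$ duality and the block description preceding Lemma \ref{lem:order}) with the comparability relation among staircases used by Edelsbrunner--Grayson, so that their lemma on the interior of cells of the edgewise subdivision can be quoted or adapted. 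Once that identification is made, local homeomorphy on each open cell, plus the observation that $|f|$ is injective on $|\D(G_{\i}(P_r))|_\cf$ because distinct interior points of distinct cells have distinct averaged staircases, will finish the proof; gluing these cell-wise homeomorphisms over all faces of $\sigma_\cf$ then yields the global statement, which is the ingredient needed for Proposition \ref{sub}.
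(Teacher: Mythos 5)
Your overall strategy---realize a point of the open simplex spanned by $\cf$ as a weighted average of ``staircases'' and build an explicit piecewise-linear inverse in the style of Edelsbrunner's treatment of the edgewise subdivision---is exactly the strategy the paper follows, and your reading of $|f|$ as averaging cumulative counting functions is correct. The gap is that the proposal stops at the step that carries all the content, and the reduction you propose to fill it would not work as stated. You assert that the clique condition forces the staircases $a_i$ to be pairwise comparable as monotone step functions and that, after translating reflexivity, the Edelsbrunner--Grayson cell lemma ``can be quoted or adapted.'' For a general reflexive $\i$ this identification fails: restricted to multichains supported on $\cf$, the relation $\pf\preceq_\i\qf$ forces $p_t\leq q_t$ in the coordinates with $\i(t)=2t-1$ but $q_t\leq p_t$ in the coordinates with $\i(t)=2t$, so the staircases in a clique are comparable only for a coordinate-wise \emph{twisted} order, not the pointwise order underlying the edgewise subdivision. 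Since by Proposition \ref{prop:different} the $2^{r-1}$ reflexive choices of $\i$ give pairwise distinct subdivisions, only one of which is the $r$\textsuperscript{th} edgewise subdivision, and since the $\i\leftrightarrow\i^*$ duality cannot turn a mixed pattern of values $2t-1$ and $2t$ into the constant pattern $2t-1$, there is no reduction to the known case: the inverse must be constructed from scratch for each parity pattern of $\i$.

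Concretely, what is missing is the $\i$-dependent rounding rule and its verification. Given $z=\sum_k\lambda_k p_k$, the breakpoints at which the staircase changes are governed by the fractional part of $r(\lambda_1+\cdots+\lambda_\ell)$ in the rows $i$ with $\i(i)=2i-1$, but by its \emph{complement} in the rows with $\i(i)=2i$ (the rounding runs in the opposite direction there, producing rows that decrease along the chain of multichains); and the check that averaging the resulting multichains returns the coefficient $\lambda_k$ splits into four cases according to the values of $\i$ at the two transition indices bounding the block of $p_k$. Your description of the inverse (``the rounding of the piecewise-linear path to the integer lattice'') does not specify this per-coordinate direction, and without it neither the existence and uniqueness of the convex decomposition into allowed staircases, nor the identity $|f|\circ(\text{inverse})=\mathrm{id}$, follows; the sign pattern is precisely where the hypothesis $\i(t)\in\{2t-1,2t\}$ from Lemma \ref{lem:order}(1) is consumed. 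So the proposal is a correct plan with the right reference point, but the adaptation you defer to the end is the actual proof of the lemma.
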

\begin{proof}
   Let $z$ be a point in the interior of the simplex spanned by $\cf$. 
   If we write $z$ in barycentric coordinates as $z=\sum_{i=1}^{n}\lambda_ip_i$ then $\lambda_1,\ldots, \lambda_n > 0$. 
   
   Next we define a matrix 
   $Q_{z,\i} = (q_{i,j}) \in P^{r \times m}$ in the following way. For $1 \leq \ell \leq n$ we set 

  $$\al(\ell) =
   \left\{
  \begin{array}{ll}
    r(\lambda_1+\cdots + \lambda_\ell) -(i-1), & \text{ if \, \,} \genfrac{}{}{0pt}{}{i-1 < r(\lambda_1+\cdots + \lambda_\ell) \leq i }{\text{and } \i(i)=2i-1}\\
    1-\big[ r(\lambda_1+\cdots +\lambda_\ell)
    -(i-1)\big], 
    & \text{ if \,\,} \genfrac{}{}{0pt}{}{i-1 < r(\lambda_1+\cdots + \lambda_\ell) < i}{\text{and } \i(i)=2i}\\
    1,& \text{ if \,\,} \genfrac{}{}{0pt}{}{ r(\lambda_1+\cdots + \lambda_\ell) =i}{\text{and } \i(i)=2i}
  \end{array}
\right. $$  
It is easily seen that $0<\al(\ell)\leq 1$. We denote by
   $(\al_1,\ldots, \al_m)$ the arrangement of the distinct $\al(1),\ldots, \al(n)$ in strictly increasing order.
   Note that by $\alpha(n)= 1$ we will always have $\alpha_m = 1$.
   For numbers $1 \leq i \leq r$ and $1 \leq j \leq m$ we set 
   $$k = \left\{ \begin{array}{ccc} \min\{ k'~:~ r(\lambda_1+\cdots + \lambda_{k'}) \geq i-1+\alpha_{j}\} & \text{ if } \i(i) = 2i-1 \\ 
   \min\{ k'~:~ i-r(\lambda_1+\cdots + \lambda_{k'}) < \alpha_{j}\} & \text{ if } \i(i) = 2i. \end{array} \right. $$
   If we define $q_{i,j} = p_k$ it follows that 
   \begin{align}
     \label{eq:crucial}
        q_{i,1} \leq q_{i,2} \leq \cdots \leq q_{i,n} & \text{ for } \i(i)=2i-1 \\ \nonumber q_{i,m} \leq q_{i,m-1}\leq \cdots \leq q_{r,1} & \text{ for } \i(i)=2i.
    \end{align}
        
    Every element in the $j$\textsuperscript{th} row is smaller than every element in the $(j+1)$\textsuperscript{st} row. 
   In particular, $\qf_j = q_{1,j} \leq \cdots\leq q_{r,j} \in P_r$ for
   $1 \leq j \leq m$. The construction implies that $\qf_j\neq \qf_{j'}$ for $j\neq j'$. Now \eqref{eq:crucial} and
   the definition of $\les$ imply that  
   $$\qf_{1}\prec_{\imath}\qf_{2}\prec_{\i}\cdots \prec_{\i} \qf_{m}.$$
   
Set $x:=\sum_{\ell=1}^{m}(\al_{\ell}-\al_{\ell-1})\qf_{\ell}$ in $|\D(G_{\i}(P_r))|$, where  $\al_{0}=0$. By construction we have that $x \in |\Delta(G_\i(P_r))|_\cf$. From the definition of $|f|$ it follows that 
$|f|(x)=\sum_{\ell=1}^{m}\frac{1}{r}(\al_{\ell}-\al_{\ell-1})(\sum_{i=1}^{n}q_{i,\ell})$.

Let $1\leq k\leq n$. Suppose that  $i_1-1<r(\lambda_1+\cdots+\lambda_{k-1})\leq i_1$ and  $i_2-1<r(\lambda_1+\cdots+\lambda_{k})\leq i_2$ for some $1\leq i_1\leq i_2< r$.

\smallskip

\noindent\textsf{Case 1:} If $\i(i_1)=2i_1-1$ and $\i(i_2)=2i_2-1$, then 
 \begin{equation}\label{r1}
     r\lambda_k=(i_2-1+\al_{t_2})-(i_1-1+\al_{t_1})
 \end{equation}
for some $1\leq t_1,t_2\leq m$.
  
By the definition of  $q_{i,j}$ we have that $p_k$ appears in the matrix $q_{b,\i}$ at positions
$$(i_1,t_1+1),\ldots, (i_2,t_2).$$ 
Thus, in the sum $|f|(x)$, the coefficient of $p_k$ is $\frac{1}{r}\Big((1-\al_{t_1})+(i_2-i_1-1)+(\al_{t_2})\Big)$
which equals $\lambda_k$ by \eqref{r1}.

\smallskip

\noindent\textsf{Case 2:} If $\i(i_1)=2i_1-1$ and $\i(i_2)=2i_2$, then 
 \begin{equation}\label{r2}
r\lambda_k= (i_2-\al_{t_2})-(i_1-1+\al_{t_1})  
\end{equation}
for some $1\leq t_1,t_2\leq m$.
  
Again by the definition of  $q_{i,j}$ we have that $p_k$ appears in the matrix $q_{b,\i}$ at positions
$$(i_1,t_1+1),\ldots, (i_2-1,m), (i_2,t_2+1),\ldots, (i_2,m).$$

Thus, in the sum $|f|(x)$, the coefficient of $p_k$ is $\frac{1}{r}\Big((1-\al_{t_1})+(i_2-i_1-1)+(1-\al_{t_2})\Big)$ which equals $\lambda_k$ by \eqref{r2}.

\smallskip 

\noindent\textsf{Case 3:} If $\i(i_1)=2i_1$ and $\i(i_2)=2i_2-1$, then 
 \begin{equation}\label{r3}
    r\lambda_k = (i_2-1+\al_{t_2})-(i_1-\al_{t_1})
\end{equation}
for some $1\leq t_1,t_2\leq m$.
  
Here $p_k$ appears in the matrix $q_{b,\i}$ at positions
$$(i_1,1),\ldots,(i_1,t_1),(i_1-1,1),\ldots, (i_2,t_2).$$

Thus, in the sum $|f|(x)$, the coefficient of $p_k$ is
$\frac{1}{r}\Big((\al_{t_1})+(i_2-i_1-1)+(\al_{t_2})\Big)$
which again equals $\lambda_k$ by \eqref{r3}. 

\smallskip 

\textsf{Case 4:} If $\i(i_1)=2i_1$ and $\i(i_2)=2i_2$, then 
 \begin{equation}\label{r4}
 r\lambda_k = (i_2-\al_{t_2})-(i_1-\al_{t_1})
\end{equation}
for some $1\leq t_1,t_2\leq m$.
  
 In this case $p_k$ appears in the matrix $q_{b,\i}$ at positions $$(i_1,1),\ldots,(i_1,t_1),(i_1+1,1)\ldots,(i_2-1,m), (i_2,t_2+1),\ldots, (i_2,m).$$
Thus, in the sum $|f|(x)$, the coefficient of $p_k$ is $\frac{1}{r}\Big((\al_{t_1})+(i_2-i_1-1)+(1-\al_{t_2})\Big)$ which again equals $\lambda_k$ by \eqref{r4}. 

\smallskip

The preceding cases imply that  $|f|(x)=z$. In particular, it follows that $|f|$ is surjective. 

For injectivity consider a point $y \in |\Delta(G_\i(P_r))|_\cf$ and
expand $y = \sum_{i=1}^\ell \mu_i \pf_i$ in barycentric coordinates
$\mu_i > 0$ and $\pf_i \in P_r$. Consider $z = |f|(y)$ 
and expands it in terms of barycentric coordinates $z = 
\sum_{i=1}^n \lambda_i p_i$. Going through the steps of constructing
a preimage of $z$ from the proof of surjectivity one checks that 
the preimage coincides with $y$. Now injectivity follows.

As a piecewise linear, continuous and bijective map $|f|$ is a homeomorphism
% Since  the matrix $M_{\imath,b}$ is unique so we get the unique chain  and coefficients $c_l$. Thus, $X$ is unique in $\D(\PP)$ such that $f_{\imath}(X)=b$.
\end{proof}

\begin{proof}[Proof of Proposition \ref{sub}] By Lemma \ref{lem:help} we know that $|f|$ restricts to a homeomorphism from the interior of a simplex from $|\Delta(P)|$ to the union of the simplices in $|\Delta(G_\i(P_r))|$ whose vertices use all elements of the simplex. Now this fact implies that
$\Delta(G_\i(P_r))$ is a subdivision of 
$\Delta(P)$. 
\end{proof}

Next we show that if $\les$ is not reflexive then $\D(G_\i(P_r))$ is no longer a subdivision of $\D(P)$.
Before we do so we reduce the problem to chains of length $\geq 1$ 
and $\i$ with $\i(1) = 1$ and $\i(2) = 2$.

 \noindent {\sf Reduction to chains:} 
 By Lemma \ref{lem:simple}(2) if $\preceq_\i$ is not reflexive then $P$ must have a chain of length one. If two simplicial complexes $\Delta$ and $\Delta'$ have homeomorphic geometric realizations then their dimension coincide and the subcomplexes generated by the simplices of dimension $\dim(\Delta)$ and $\dim(\Delta')$ respectively have homeomorphic geometric realization 
 as well. Since $\D(G_\i(P_r))$ is the union of $\D(G_\i(C))$ for 
 the maximal chains $C$ in $P$ it therefore suffices to show that 
 for a chain $C$ of length $\geq 1$ 
 either $\dim(\Delta(C)) < \dim \D(G_\i(C))$ 
 or $\D(G_\i(C))$ is not pure. Recall, that a simplicial complex
 is called pure if all its inclusionwise maximal simplices have the same dimension. Hence without loss of generality we may
 assume $P = \{ p_0 < \cdots < p_s \}$ for some $s \geq 1$.
 
 \noindent {\sf Reduction to $\i(1) =1, \i(2) = 2$:} 
 We already have seen that we can assume $\i(1) = 1$. 
 If $\les$ is not reflexive, then $\i$ violates the conditions of Lemma \ref{lem:order}. Let  $k>1$ be the minimal index such that $\i(k)\neq 2k-1,2k$. Then either $\i(k)$ is larger than $2k$ or 
 less than $2k-1$.

 If $\i(k) < 2k-1$ then $\i(k)=2k-2$ and $\i(k-1)=2k-3$ by minimality of $k$. Let $\i_1:[r-k+2]\rightarrow [2(r-k+2)]$ be such that $\i_1(j)=\i(k-2+j)-2(k-2)$. Then $\i_1(1)=1$ and $\i_1(2)=2$. Every $(r-k+2)$-multichain $\pf$ in $P$ can be extended to an $r$-multichain $\pf'$ in $P$ as $\pf'= \underbrace{p_0\leq \cdots \leq p_0}_{k-2}\leq\pf $. Moreover, if $\pf\prec_{\i_1}\qf$ then $\pf'\prec_{\i}\qf'$. It follows that $\Delta(G_{\i_1}(P_{r-k+1}))$
 is a subcomplex of $\Delta(G_\i(P_r))$ of the same dimension.
 
 If $\i(k) > 2k$ then $\i(k) =2k+j$, for some $j\geq 1$. Let $\i_2:[r-k+1]\rightarrow [2(r-k+1)]$ be such that $\i_2(j)=\i(k-1+j)-2(k-1)$. Then $\i_2(1)=j+2$. By duality, we have  $\i^*_2(i)=i$ for all $i\leq j+1$. Every $(r-k+1)$-multichain $\pf$ in $P$ can be extended to a $r$-multichain $\pf'$ in $P$ as $\pf'= \underbrace{p_0\leq \cdots \leq p_0}_{k-1}\leq\pf$. Moreover, if $\pf\succ_{\i^*_2}\qf$ then $\pf'\prec_{\i}\qf'$.
 It follows that $\Delta(G_{\i_1}(P_{r-k+1}))$
 is a subcomplex of $\Delta(G_\i(P_r))$. In particular, if 
 $\dim(\Delta(G_{\i_1}(P_{r-k+1})) > \dim(\Delta(P))$ then
 $\dim(\Delta(G_\i(P_r))) > \dim(\Delta(P)$ and if 
 $\dim(\Delta(G_{\i_1}(P_{r-k+1}))) = \dim(\Delta(P)$ and
 $\Delta(G_{\i_1}(P_{r-k+1}))$ is not pure then so is 
 $\dim(\Delta(G_\i(P_r)))$. Note that it is easily seen that 
 $\dim(\Delta(G_\i(P_r))) \geq \dim(\Delta(P))$ for any $\i$, so the 
 two cases considered in the preceding argument are exhaustive.

 The two reductions imply that it is enough to consider the
 cases covered by the following lemma.

\begin{lema} \label{lem:dim}
  If $P = \{ p_0 < \cdots < p_s\}$ 
  is a chain of length $s \geq 1$ and $\i : [r] \rightarrow [2r]$ such that $\i(1) = 1$ and $\i(2) = 2$
  then either $\dim(\Delta(G_\i(P_r))) > \dim(\Delta(P))$ or $\D(G_{\i}(P_r))$ is not pure.
\end{lema}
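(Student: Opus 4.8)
The plan is to exhibit, for any $\i$ with $\i(1)=1$, $\i(2)=2$, and $P=\{p_0<\cdots<p_s\}$ a chain of length $s\geq 1$, an explicit violation of one of the two conclusions. First I would compute $\dim(\Delta(P))=s$. The key quantity to understand is the size of the largest clique in $G_\i(P_r)$; a clique of size $t$ gives a simplex of dimension $t-1$, so I want either a clique of size $>s+1$ (forcing $\dim(\Delta(G_\i(P_r)))>\dim(\Delta(P))$) or two inclusionwise-maximal cliques of different sizes (forcing non-purity). The guiding idea is that, because $\i(1)=1$ and $\i(2)=2$, the first two coordinates behave like the ``identity'' pattern $\i(t)=t$ discussed right after Lemma \ref{lem:simple}, which already destroyed reflexivity: for the multichain $\pf$ that is $p_0$ in the first $r-1$ slots and $p_1$ in the last slot one has $\pf\not\preceq_\i\pf$. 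I would use block language from the paragraph before Lemma \ref{lem:order}: $\i(1)=1,\i(2)=2$ means $b_1\geq 2$, so $B_1\supseteq\{1,2\}$ and $C_1$ (if nonempty) starts at some index $\geq b_1+1\geq 3$; equivalently $\i^*(1)\geq 3$.

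The central step is a chain analysis. For $P$ a chain, an $r$-multichain is just a weakly increasing word $p_{a_1}\leq\cdots\leq p_{a_r}$ with $0\leq a_1\leq\cdots\leq a_r\leq s$, recorded by the vector $(a_1,\dots,a_r)$. I would translate $\pf\preceq_\i\qf$ via the length-$2r$ interleaved chain condition displayed before Lemma \ref{lem:order}: writing $a$ for $\pf$ and $b$ for $\qf$, the relation says a certain shuffle of the $a_t$'s and $b_s$'s (dictated by the blocks $B_i$, $C_i$) is weakly increasing. A clique in $G_\i(P_r)$ is then a set of such vectors that is totally ordered by $\preceq_\i$ and pairwise distinct. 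The plan is to split into two cases according to whether $\i$ is reflexive-violating ``upward'' ($\i(k)>2k$ for the minimal bad $k$, equivalently some $b_i\geq 3$) or ``downward'' ($\i(k)<2k-1$, equivalently some $c_i$ makes the interleaving skip), but since the two reductions preceding the lemma have already normalized to $\i(1)=1,\i(2)=2$, I expect it cleaner to argue directly: I would first show $\dim\Delta(G_\i(P_r))\geq s$ always (e.g. the constant multichains $p_j\leq\cdots\leq p_j$ for $j=0,\dots,s$ form a clique, since $\i(1)=1$ handles the bottom and one checks the interleaving for a constant word), and then either build a clique of size $s+2$ or find a maximal clique of size $<s+1$.

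The two ingredients I would assemble are: (a) a clique of size $s+2$ when the ``upward'' failure occurs — here the extra room created by a block $b_i\geq 3$ (or by $\i^*(1)=2$, i.e. $\i$ skipping value $2$, which is excluded by $\i(2)=2$, so it must be an internal skip) lets one insert an extra multichain between two consecutive constant ones, e.g. interpolating $p_{j}\dots p_j$ and $p_{j+1}\dots p_{j+1}$ by a mixed word that is $\preceq_\i$-between both; and (b) a small maximal clique when the ``downward'' failure occurs — here some multichain $\pf$ with $\pf\not\preceq_\i\pf$ cannot lie in any clique, or more usefully some $\pf$ is $\preceq_\i$-comparable to very few others, so the clique containing the relevant constant multichains together with $\pf$'s neighborhood is maximal but short. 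Concretely, the multichain $p_0^{r-1}p_1$ from the remark after Lemma \ref{lem:simple} is a natural candidate: I would determine exactly which $\qf$ satisfy $p_0^{r-1}p_1\preceq_\i\qf$ or $\qf\preceq_\i p_0^{r-1}p_1$, show these form a maximal clique once we throw in this vertex, and count that its size is at most $s$ (hence a maximal simplex of dimension $\leq s-1<s$), giving non-purity against the constant-multichain simplex of dimension $s$.

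The main obstacle I anticipate is the bookkeeping in case (a): verifying that the interleaved length-$2r$ word really is weakly increasing for the interpolating multichain requires tracking where each of its $r$ coordinates lands among the $a_t$ and $b_s$ slots, and the block data $b_1,\dots,b_\ell,c_1,\dots,c_\ell$ enters in a fiddly way. I would handle this by reducing to the essential local picture — only the block(s) responsible for the reflexivity failure matter, and by the preceding reduction we may assume the failure is already at $\i(1)=1,\i(2)=2$, i.e. $b_1\geq 2$ with $B_1=\{1,\dots,b_1\}$ — and then choosing the interpolant to be of the simple form $p_j$ in the first few coordinates and $p_{j+1}$ in the rest, so that both ``$\preceq_\i$ the upper constant'' and ``$\succeq_\i$ the lower constant'' reduce to a single comparison of the cut-point against $b_1$. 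The dual situation (using $\i^*$ in place of $\i$, invoking that $\preceq_\i$ is dual to $\preceq_{\i^*}$) covers the ``downward'' case with the same combinatorics, so I would phrase the core computation once and apply it twice.
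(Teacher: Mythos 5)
Your high-level frame is the right one and matches the paper's: compute $\dim(\Delta(P))=s$ and then either exhibit a clique of size $s+2$ or two inclusionwise-maximal cliques of different sizes. But the concrete constructions you propose contain a genuine error and leave the hard part of the argument unaddressed. The error: the constant multichains $p_j\leq\cdots\leq p_j$, $j=0,\ldots,s$, do \emph{not} in general form a clique in $G_\i(P_r)$. For $\pf=p^r$ and $\qf=q^r$ with $p<q$, the relation $\pf\les\qf$ forces $p\geq q$ as soon as $\i(t)>t$ for some $t$ (take $s=1\leq\i(t)-t$), and $\qf\les\pf$ forces $q\leq p$ via $t=1$, $s=r$; so unless $\i$ is the identity the constant multichains are pairwise incomparable. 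You can see this in the paper's own Example \ref{ex:counter} ($r=3$, $\i=(1,2,4)$): no listed maximal simplex contains both $111$ and $222$. This breaks both your baseline claim $\dim\Delta(G_\i(P_r))\geq s$ (true, but needing a different witness) and, more importantly, your entire non-purity argument, which contrasts a putative small maximal clique around $p_0^{r-1}p_1$ against ``the constant-multichain simplex of dimension $s$.'' That same example also shows $p_0^{r-1}p_1=112$ sits in a maximal clique of size $5>s+1$, so its neighborhood is not ``short'' in the way you hope.

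The missing idea is that which alternative of the lemma holds is \emph{not} determined by the local reflexivity failure at $\i(2)=2$; it depends on the global interaction of the block sums $b_1+\cdots+b_t$ and $c_1+\cdots+c_t$. The paper's proof splits on whether $b_1+\cdots+b_t\leq c_1+\cdots+c_t$ ever occurs for $t<\ell$, produces explicit $(s+2)$-cliques of the form $p_0^{a}p_j^{r-a}$ in three subcases, produces two explicit maximal cliques of sizes $3$ and $5$ in the remaining subcase ($b_1>c_1$ and $b_\ell<c_\ell$), and then handles the general case by an induction that truncates $\i$ to $\i_0$ on $[r_0]$ and pads the resulting multichains with $p_s^{r-r_0}$. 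Your proposal has no analogue of this case distinction or of the induction, and the duality reduction you invoke at the end cannot substitute for it, since after the normalization $\i(1)=1,\i(2)=2$ the failure is already of one fixed type and the remaining difficulty is global, not local. To repair the argument you would need, at minimum, correct explicit cliques built from two-valued words $p_j^{a}p_{j'}^{r-a}$ (as in the paper) together with a mechanism for deciding when a large clique exists versus when one must instead exhibit two maximal cliques of unequal size.
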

\begin{proof}
  Let $b_1,\ldots, b_\ell$ and $c_1,\ldots, c_{\ell'}$ be the numbers
  associated to $\i$ in Section \ref{sec:basic}.
  In the proof we will write $p_0^{a_0}p_1^{a_1}\cdots p_{s}^{a_s}$ for the $r$-multichain
  $$\underbrace{p_0\leq \cdots \leq p_0}_{a_0} \leq
  \underbrace{p_1\leq \cdots \leq p_1}_{a_1} \leq \cdots 
  \leq \underbrace{p_s\leq \cdots \leq p_s}_{a_s},$$
  terms with $a_j = 0$ can be omitted.
  
  \smallskip
  
  \noindent {\sf Case A:} There is no index $t<\ell$ such that $b_1+\cdots+b_t\leq c_1+\cdots+c_t$. 
  
  \smallskip
  
  Thus, 
  if $c_{\ell}\neq 0$, then $c_{\ell}\geq 2$, and if $c_{\ell}=0$, then  $2\leq b_{\ell}\leq c_{\ell-1}$.
 
  \smallskip
  
  \noindent {\sf Subcase A1:}  $b_1\leq c_1$
  
  Then in $G_\i(P_r)$ we have a $(s+2)$-clique of $r$-multichains formed by 
  $$p_0^{b_1}p_s^{r-b_1},\, p_1^{b_1}p_s^{r-b_1}, \ldots ,\, p_{s-1}^{b_1}p_s^{r-b_1},\,p_{s-1}p_s^{r-1},\, p_s^r.$$
  
  It follows that $\dim(\D(G_\i(P_r))) > \dim(\Delta(P))$.

\smallskip

\noindent {\sf Subcase A2:} $b_{\ell}\geq c_{\ell}$, $c_{\ell}\neq 0$

Then in $G_\i(P_r)$ we have a $(s+2)$-clique of $r$-multichains formed by
$$p_0^{r},\, p_0^{r-1}p_1^{r-1},\, p_0^{r-c_{\ell}}p_1^{c_{\ell}},\, p_0^{r-c_{\ell}}p_2^{c_{\ell}}, \ldots, \,p_{1}^{r-c_{\ell}}p_s^{c_{\ell}}.
$$ 

It follows that $\dim(\D(G_\i(P_r))) > \dim(\Delta(P))$.

\smallskip

\noindent {\sf Subcase A3:}  $b_{\ell}\leq c_{\ell-1}$, $c_{\ell}=0$

Then in $G_\i(P_r)$ we have a $(s+2)$-clique of $r$-multichains formed by

$$p_0^{r-b_{\ell}}p_s^{b_{\ell}},\ldots, \, p_0^{r-b_{\ell}}p_2^{b_{\ell}},\, p_0^{r-1}p_1^{r-1},\, p_0^{r}.$$

It follows that $\dim(\D(G_\i(P_r))) > \dim(\Delta(P))$.

\smallskip

\noindent {\sf Subcase A4:} $b_1>c_1$ and $b_{\ell}<c_{\ell}$,
$c_{\ell}\neq 0$

The complex $\D(G_{\i}(P_r))$ has maximal simplices of different dimensions and therefore is not pure. For example
$$p_0^r,\, p_0^{r-1}p_s,\, p_0^{r-c_{\ell}}p_s^{c_{\ell}}$$ and 
$$p_0^{b_1}p_1^{r-b_1},\, p_0p_1^{r-1},\,p_1^r,\, p_1^{r-1}p_2,\,p_1^{r-c_{\ell}}p_2^{c_{\ell}}.$$
form two maximal cliques of different sizes. Maximality can be easily checked.

\smallskip

\noindent {\sf Case B:} 
There is an index $t<\ell$ such that $b_1+\cdots+b_t\leq c_1+\cdots+c_t$. 

\smallskip

Let $t$ be the smallest such index and set $r_0:=b_1+\cdots+b_t$. Let $\i_0: [r_0]\rightarrow [2r_0]$ such that $\i_0(i)=\i(i)$ for all $1\leq i\leq r_0 $. By induction on $r_0$ the claim holds for $\i_0$. 

\smallskip

\noindent {\sf Subcase B1:} 
$\i_0$ satisfies the conditions of one of Subcase A1,A2 or A3

Then in $G_{\i_0}(P_{r_0})$ there is an $(s+2)$-clique of $r_0$-multichains 
$$\pf_0,\,\pf_1,\ldots,\, \pf_{s+1}.$$
This clique gives rise to the $(s+2)$-clique of $r$-multichains in $G_\i(P_r)$ 
$$\pf'_0,\,\pf'_1,\,\ldots, \,\pf'_{s+1},$$
where $\pf'_j=\pf_j\leq \underbrace{p_s\leq \cdots \leq p_s}_{r-r_0}$. Thus, $\dim \D(P)<\dim \D(G_{\i}(P_r))$.

\smallskip

\noindent {\sf Subcase B2:} $\i_0$ satisfies the assumptions of Subcase A4

Then there are two maximal cliques in $G_{i_0}(P_{r_0})$ formed by  
$$p_0^{r_0},\, p_0^{r_0-1}p_s,\, p_0^{r_1-c'_{\ell}}p_s^{c'_{\ell}}$$ and 
$$p_0^{b'_1}p_1^{r_0-b'_1},\, p_0p_1^{r_1-1},\,p_1^{r_0}\,\, p_1^{r_0-1}p_2,\,p_1^{r_0-c'_{\ell}}p_2^{c'_{\ell}}$$  of sizes $3$ and $5$ respectively. 
These cliques give rise to maximal cliques $$p_0^{r_0}p_s^{r-r_0},\, p_0^{r_0-1}p_s^{r-r_0+1},\, p_0^{r_0-c'_{\ell}}p_s^{r-r_0+c'_{\ell}}$$ and 
$$p_0^{b'_1}p_1^{r_0-b'_1}p_s^{r-r_0},\, p_0p_1^{r_0-1}p_s^{r-r_0},\,p_1^{r_0}p_s^{r-r_0},\, p_1^{r_0-1}p_2p_s^{r-r_0},\,p_1^{r_0-c'_{\ell}}p_2^{c'_{\ell}}p_s^{r-r_0}$$ in
$G_\i(P_r)$ of sizes $3$ and $5$ respectively. It follows that $\D(G_\i(P_r))$ is not pure.
\end{proof}

\begin{proof}[Proof of Theorem \ref{teo:reflexive}] From Proposition
\ref{sub} the implication (1) $\Rightarrow$ (2) follows. The implication (2) $\Rightarrow$ (3) is trivial. Now Lemma \ref{lem:dim} says that if $\preceq_\i$ is not
reflexive then either $\dim(\Delta(P)) <
\dim(\Delta(G_\i(P_r)))$ or $\D(G_{\i}(P_r))$ is non-pure. Hence the two cannot have homeomorphic geometric realizations. Therefore (3) implies (1).
\end{proof}

Next we show that for generic $P$ 
there are $2^{r-1}$ different subdivisions
$\Delta(G_\i(P_r))$ for different $\i$.
We collect in  $\mathcal{I}$ the 
 strictly increasing $\i : [r] \rightarrow [2r]$ which satisfy these conditions and in addition $\i(1) = 1$. Since Lemma \ref{lem:order} (1) leaves two choices for $\i(t)$ and $t \in [r] \setminus \{1\}$ it follows that $\mathcal{I}$ consists of
 $2^{r-1}$ maps.

 %If $\i \in  \mathcal{I}$ and 
 %$\pf \les \qf$ then 
 %$q_t\leq p_t$ in case $\imath(t)=2t$ and $p_t\leq q_t$ in case $\imath(t)=2t-1$. 
 %Thus $\mathcal{I}$ consists of
 %$2^{r-1}$ maps $\i$.  
 
 The following lemma is an immediate consequence of the definition of $\les$ and Lemma \ref{lem:order}(1).
 
 \begin{lema}
  Let $\pf:p_1\leq\cdots\leq p_r$, $\qf: q_1\leq \cdots \leq q_r$ be in $P_r$ such that $\pf\neq \qf$. We set
  $K_1:=\{k\in [r]\ :\ p_k<q_k\}$ and $K_2:=\{k\in [r]\  :\ q_k<p_k\}$.
  
  Let $\i\in \mathcal{I}$. Then the following are equivalent:
  \begin{itemize}
    \item[(a)] $\pf\les \qf$ or $\qf \les \pf$
    \item[(b)] $\{ \pf,\qf\}$ is an edge in $G_\i(P_r)$.
    \item[(c)] $\i$ satisfies   $$\i(k)=\left\{
                                                   \begin{array}{ll}
                                                     2k-1 & \text{ for } k\in K_1, \\
                                                     2k & \text{ for } k\in K_2
                                                   \end{array} 
                                                 \right. \text{ and all } k \in [r]
    $$ or $$\i(k)=\left\{
                                                   \begin{array}{ll}
                                                     2k & \text{ for } k\in K_1, \\
                                                     2k-1 & \text{ for } k\in K_2
                                                   \end{array}
                                                 \right.\text{  and all } k \in [r]  
    $$   
    \end{itemize}
\end{lema}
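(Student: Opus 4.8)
The plan is to separate the formal equivalence \textbf{(a)}$\Leftrightarrow$\textbf{(b)} from the substantive one \textbf{(a)}$\Leftrightarrow$\textbf{(c)}, the latter of which is essentially bookkeeping with the definition of $\les$. For \textbf{(a)}$\Leftrightarrow$\textbf{(b)}: by definition $E$ consists of the two--element sets $\{\pf,\qf\}$ with $\pf\neq\qf$ and $\pf\les\qf$; since an edge is an \emph{unordered} pair, $\{\pf,\qf\}\in E$ holds exactly when $\pf\neq\qf$ and ($\pf\les\qf$ or $\qf\les\pf$), which under the standing hypothesis $\pf\neq\qf$ is verbatim statement \textbf{(a)}. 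So nothing remains here.

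For \textbf{(a)}$\Leftrightarrow$\textbf{(c)} I would use the reformulation of $\les$ by a weakly increasing length-$2r$ sequence in $P$ recorded earlier in Section~\ref{sec:basic}: $\pf\les\qf$ means that $\pf$ and $\qf$ can be merged, in the pattern prescribed by the blocks of $\i$, into a single chain of length $2r$. Since $\i\in\mathcal I$, Lemma~\ref{lem:order}(1) gives $\i(k)\in\{2k-1,2k\}$ for every $k$, whence $\i^*(k)$ is forced to be the other element of $\{2k-1,2k\}$. Therefore in this merged sequence $p_k$ lands in position $\i(k)$ and $q_k$ in position $\i^*(k)$, so the two of them occupy the position-pair $\{2k-1,2k\}$. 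Hence $\pf\les\qf$ holds precisely when the word that, for each $k$, carries the ordered pair $(p_k,q_k)$ in positions $(2k-1,2k)$ if $\i(k)=2k-1$ and the pair $(q_k,p_k)$ if $\i(k)=2k$, is weakly increasing in $P$. Looking \emph{within} a single position-pair, weak monotonicity forces $p_k\le q_k$ when $\i(k)=2k-1$ and $q_k\le p_k$ when $\i(k)=2k$; for $k\in K_1$ this excludes $\i(k)=2k$ and for $k\in K_2$ it excludes $\i(k)=2k-1$. Thus $\pf\les\qf$ entails the first displayed alternative of \textbf{(c)}, and symmetrically $\qf\les\pf$ entails the second, giving \textbf{(a)}$\Rightarrow$\textbf{(c)}.

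For the converse I would assume the first alternative of \textbf{(c)} (the second being symmetric after interchanging $\pf$ and $\qf$) and verify that the prescribed placement does produce a weakly increasing word, i.e.\ that $\pf\les\qf$. Inside each position-pair this is exactly the sign condition used above --- for $k\in[r]\setminus(K_1\cup K_2)$, where $p_k=q_k$, either placement is order-preserving, so the values $\i(k)$ there play no role --- while across two consecutive position-pairs one has to compare the second entry of pair $k$ with the first entry of pair $k+1$; here one invokes that $p_1\le\cdots\le p_r$ and $q_1\le\cdots\le q_r$ are themselves chains, together with the sign pattern imposed by \textbf{(c)}, running through the cases for $(\i(k),\i(k+1))$.

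The step I expect to be the real obstacle is precisely this converse: establishing that the local sign pattern of \textbf{(c)} is not only necessary but \emph{sufficient} for monotonicity of the whole length-$2r$ word, so that no comparison between a $p_j$ and a $q_k$ with $j\neq k$ need be imposed separately. This is where the argument genuinely uses that $\pf$ and $\qf$ are multichains rather than arbitrary tuples, and where one must be careful about how consecutive position-pairs abut; the residual verifications are a routine case analysis.
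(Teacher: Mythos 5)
Your handling of (a) $\Leftrightarrow$ (b) is fine, and your proof of (a) $\Rightarrow$ (c) via the merged length-$2r$ word is correct: for $\i\in\mathcal{I}$ the complement $\i^*(k)$ is indeed the other element of $\{2k-1,2k\}$, so $p_k$ and $q_k$ occupy the position-pair $\{2k-1,2k\}$, and monotonicity \emph{within} each pair forces the sign pattern of (c). (The paper offers no proof to compare against; it declares the lemma an immediate consequence of the definitions.)

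However, the step you flag as the ``real obstacle'' is a genuine obstruction: the implication (c) $\Rightarrow$ (a) is false, so the ``routine case analysis'' you defer cannot be completed. Of the four abutment cases between consecutive position-pairs, two fail: if $\i(k)=2k-1$ and $\i(k+1)=2k+3-2=2(k+1)-1$ the merged word needs $q_k\le p_{k+1}$, and if $\i(k)=2k$ and $\i(k+1)=2(k+1)$ it needs $p_k\le q_{k+1}$; neither follows from $\pf,\qf$ being multichains together with the within-pair inequalities. Concretely, take $r=2$, $\i(1)=1$, $\i(2)=3$, a two-element chain $p<q$ in $P$, and $\pf:p\le p$, $\qf:q\le q$. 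Then $K_1=\{1,2\}$, $K_2=\emptyset$ and the first alternative of (c) holds, yet $\pf\les\qf$ would require $p_1\le q_1\le p_2\le q_2$, i.e.\ $q\le p$, and $\qf\les\pf$ fails likewise; indeed in the $2$\textsuperscript{nd} edgewise subdivision of Figure \ref{fig 1}(a) the vertices $p\le p$ and $q\le q$ are not adjacent. What survives is (a) $\Leftrightarrow$ (b) $\Rightarrow$ (c), which is all the paper actually uses: Lemma \ref{lem: edge} invokes only the contrapositive of (a) $\Rightarrow$ (c), and its converse concerns the special case $|K_1\cup K_2|=1$, where the problematic cross-pair comparisons degenerate to comparisons between equal coordinates and (c) $\Rightarrow$ (a) does hold. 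So the statement would need to be weakened to this one-sided form (or (c) strengthened by the missing cross-index inequalities $q_k\le p_{k+1}$, resp.\ $p_k\le q_{k+1}$); as written, your proof cannot be repaired because the claim it targets is not true.
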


Now we can classify the edges which are present in $G_\i(P_r)$ for all $\i \in \mathcal{I}$.

 \begin{lema}\label{lem: edge}
   Let $\pf:p_1\leq\cdots\leq p_r,\qf: q_1\leq \cdots \leq q_r$ in $P_r$. Then $\{\pf,\qf\}$ is an edge of $G_\i(P_r)$ for each $\i\in \mathcal{I}$ if and only if there is a unique $k \in [r]$ such that $p_k\neq q_k$.
 \end{lema}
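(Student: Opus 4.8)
The plan is to characterize, for a fixed pair $\pf \neq \qf$, exactly which $\i \in \I$ produce an edge $\{\pf,\qf\}$ in $G_\i(P_r)$, and then intersect over all $\i \in \I$. The previous lemma already does the bulk of this work: with $K_1 = \{k : p_k < q_k\}$ and $K_2 = \{k : q_k < p_k\}$ (note $K_1 \cup K_2 = \{k : p_k \neq q_k\}$ and this set is nonempty since $\pf \neq \qf$), an edge is present precisely when $\i$ takes the value $2k-1$ on $K_1$ and $2k$ on $K_2$, or else $2k$ on $K_1$ and $2k-1$ on $K_2$. So the first step is simply to quote that lemma and reduce to a counting/combinatorial statement about which constraints on $\i$ can be simultaneously satisfied by \emph{every} map in $\I$.

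The key observation is the following: for a position $k \in [r]$ with $k \geq 2$, both choices $\i(k) = 2k-1$ and $\i(k) = 2k$ occur among the maps in $\I$ (since $\I$ consists of all $2^{r-1}$ such maps with $\i(1)=1$, and the choices at distinct positions are independent). Therefore, if $\{\pf,\qf\}$ is an edge for \emph{all} $\i \in \I$, then for each $k \in K_1 \cup K_2$ the constraint imposed on $\i(k)$ by the previous lemma must be compatible with both possible values of $\i(k)$ — which is impossible unless that constraint is vacuous. Concretely: if $|K_1 \cup K_2| \geq 2$, pick two distinct positions $k, k'$ in it; for each of the four maps in $\I$ realizing the four combinations of values at $k$ and $k'$ (with the remaining positions chosen arbitrarily), the previous lemma forces a global pattern (either "$2k-1$ on $K_1$, $2k$ on $K_2$" or the swap), and one checks these two patterns cannot both be avoided — so some $\i \in \I$ fails to give an edge. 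The one subtlety is the position $k = 1$: here $\i(1) = 1 = 2\cdot 1 - 1$ is forced, so $1 \in K_2$ would already obstruct the first pattern for \emph{every} $\i$; this is consistent with the conclusion and needs a brief separate remark. Hence $|K_1 \cup K_2| = 1$, i.e.\ there is a unique $k$ with $p_k \neq q_k$.

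For the converse, suppose there is a unique $k$ with $p_k \neq q_k$, say $p_k < q_k$ (the case $q_k < p_k$ is symmetric), so $K_1 = \{k\}$, $K_2 = \emptyset$. Given any $\i \in \I$, I want to verify $\pf \les \qf$ or $\qf \les \pf$ directly from the definition of $\les$, or equivalently invoke condition (c) of the previous lemma: if $\i(k) = 2k-1$ then the first pattern in (c) holds vacuously on $K_2$ and correctly on $K_1 = \{k\}$; if $\i(k) = 2k$ then the second pattern holds. Either way condition (c) is met, so $\{\pf,\qf\}$ is an edge of $G_\i(P_r)$, and since $\i \in \I$ was arbitrary we are done. (One should double-check that when $p_j = q_j$ for all $j \neq k$, the inequalities $p_t \gtrless q_s$ required by $\les$ for $s \neq t$ are automatically satisfied because both multichains are weakly increasing and agree off position $k$; this is the routine verification underlying condition (c) and I would state it in one line rather than expand it.)

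The main obstacle is the forward direction, specifically making the "some $\i \in \I$ kills the edge" argument clean when $|K_1 \cup K_2| \geq 2$: one must exhibit an explicit $\i \in \I$ violating \emph{both} patterns in (c) simultaneously. The cleanest way is: if $|K_1 \cup K_2| \geq 2$ pick $k \in K_1 \cup K_2$ with $k \geq 2$ (possible since at most one index equals $1$), set $\i(k)$ to whichever of $2k-1, 2k$ is \emph{wrong} for the pattern dictated by some other index $k' \in K_1 \cup K_2$, and set $\i(k')$ consistently with its own membership; then neither pattern in (c) holds, so by the equivalence (a)$\Leftrightarrow$(c) the pair is not an edge for this $\i$. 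Handling the bookkeeping of which value is "wrong" relative to whether $k', k$ lie in $K_1$ or $K_2$ is the only place care is needed; everything else is a direct unwinding of definitions.
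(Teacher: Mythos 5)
Your proposal is correct and follows essentially the same route as the paper: the paper's forward direction likewise fixes two differing indices, splits on whether they lie in the same or opposite sets $K_1,K_2$, and constructs an explicit $\i\in\mathcal{I}$ for which neither $\pf\les\qf$ nor $\qf\les\pf$ holds, while the converse is the identical one-line verification. The only cosmetic difference is that you phrase the obstruction via condition (c) of the preceding lemma rather than checking the non-relation directly, and you make the $k=1$ subtlety explicit where the paper leaves it implicit.
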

 \begin{proof}
 Let $\pf$ and $\qf$ elements of $P_r$ that are connected by an edge in each $G_\i(P_r)$,
$\i \in \mathcal{I}$. Assume that there are two different indices $k_1$ and $k_2$ such that 
$p_{k_1}\neq q_{k_1}$ and $p_{k_2}\neq q_{k_2}$. 

We may assume that either (A) $p_{k_1}<q_{k_1}$ and $p_{k_2}<q_{k_2}$
or (B) $p_{k_1} < q_{k_1}$ and $p_{k_2} > q_{k_2}$.

In situation (A) define $i(k_1)=2k_1-1$ and $\i(k_2)=2k_2$ and extend $\i$ to $[r]$ such that $\i \in \mathcal{I}$. Then neither $\pf\les \qf$ nor $\qf\les\pf$ holds. 

In situation (B) define
$i(k_1)=2k_1-1$ and $\i(k_2)=2k_2-1$ and extend $\i$ to $[r]$ such that $\i \in \mathcal{I}$. Then again  neither $\pf\les \qf$ nor $\qf\les\pf$ holds.

This contradicts the assumption and there is a unique $k$ for which $p_k \neq q_k$.

Conversely,  suppose that $p_k < q_k$
and $p_\ell = q_\ell$ for $\ell \in [r] \setminus \{ k \}$. Then $\pf \les \qf$ for all $\i \in
\mathcal{I}$ with $\i(k)=2k-1$ and $\qf\les\pf$  for all $\i\in \mathcal{I}$ with $\i(k)=2k$. Thus, $\{\pf,\qf\}$ is an edge of $G_\i(P_r)$ for all $\i
\in \mathcal{I}$.
\end{proof}

Now we are position to classify then 
$G_\i(P_r)$ and $G_\jmath(P_r)$ are different for
$\i$ and $\jmath$ from $\mathcal{I}$. 

\begin{prop} \label{prop:different}
  Let $P$ be a poset and $\ell$ the 
  maximal length of a chain in $P$.
  \begin{itemize}
      \item[(1)] If $\ell \leq 1$ then for all $\i,\jmath \in \mathcal{I}$ we have $G_\i(P_r) = G_\jmath(P_r)$. 
      \item[(2)] If $\ell \geq 2$ then $G_{\i}(P_r)\neq G_{\jmath}(P_r)$ for all $\i\neq\jmath\in \mathcal{I}$.
      \end{itemize}
\end{prop}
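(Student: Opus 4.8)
The plan is to dispatch (1) quickly and then spend the effort on (2). For (1), if $\ell \le 1$ then $P$ is an antichain, so every multichain in $P_r$ is constant, i.e. of the form $p^r$; any two distinct constant multichains differ in every coordinate, but since those coordinates are incomparable, there are in fact no edges at all in $G_\i(P_r)$ regardless of $\i$. Hence all the graphs $G_\i(P_r)$ coincide (they are all the edgeless graph on $P_r \cong P$). Actually one should be slightly careful: $\ell \le 1$ allows $P$ to have a chain of length $1$, i.e.\ $p < q$; then $p^r \preceq_\i q^r$ fails to be forced for all $\i$, but the point is only that whether $\{p^r, q^r\}$ is an edge does not depend on $\i \in \mathcal I$. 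By the penultimate lemma, $\{\pf, \qf\}$ is an edge iff $\i$ lies in one of two explicitly described families determined by $K_1, K_2$; I would instead appeal directly to Lemma \ref{lem: edge}: when $\pf, \qf$ are constant and distinct they differ in every one of the $r$ coordinates, so for $r \ge 2$ there is no unique coordinate of difference, hence $\{\pf,\qf\}$ is \emph{not} an edge of $G_\i(P_r)$ for every $\i \in \mathcal I$ simultaneously — but I still need that it is an edge for \emph{none} or \emph{all}. The cleanest route: since $P$ has maximal chain length $\le 1$, any edge $\{\pf,\qf\}$ of any $G_\i(P_r)$ would have $\pf \cup \qf$ a chain in $P$, forcing $\pf = \qf$ when both are constant unless $\pf, \qf$ are a comparable pair $p^r, q^r$ with $p<q$; but then checking the definition of $\preceq_\i$ on $p^r, q^r$ shows $p^r \preceq_\i q^r$ holds iff $p \ge q$ in the first $\i(t)-t$ slots and $p \le q$ afterward — since all coordinates of $\pf$ equal $p$ and of $\qf$ equal $q$, this forces $p = q$, a contradiction. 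So $G_\i(P_r)$ is edgeless for every $\i$, proving (1).

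For (2), I want to exhibit, for any $\i \ne \jmath$ in $\mathcal I$, an explicit pair $\{\pf,\qf\}$ that is an edge of exactly one of $G_\i(P_r), G_\jmath(P_r)$. Since $\i \ne \jmath$ and both lie in $\mathcal I$ (so they agree that $\i(1) = \jmath(1) = 1$ and each of $\i(t), \jmath(t) \in \{2t-1, 2t\}$ for $t \ge 2$), there is a smallest $t_0 \ge 2$ with $\i(t_0) \ne \jmath(t_0)$; say $\i(t_0) = 2t_0 - 1$ and $\jmath(t_0) = 2t_0$. Using that $\ell \ge 2$, fix a chain $p < q < s$ in $P$ (length $2$). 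Consider the two $r$-multichains that agree in every coordinate except the $t_0$th; concretely I would take $\pf$ and $\qf$ built from the chain so that $p_{t_0} < q_{t_0}$ while $p_t = q_t$ for all $t \ne t_0$ — for instance $\pf : p^{t_0} s^{r - t_0}$ and $\qf: p^{t_0 - 1} q \, s^{r - t_0}$ (so they differ only in slot $t_0$, where $\pf$ has $p$ and $\qf$ has $q$, with $p < q$). By Lemma \ref{lem: edge} this pair \emph{is} an edge of $G_\i(P_r)$ for every $\i \in \mathcal I$ — which is the wrong conclusion! So a single differing coordinate is exactly what I must avoid.

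Instead, the pair should differ in \emph{two} coordinates, chosen so that the edge-or-not verdict toggles with the value of $\i$ at one of them. Here is the construction I would use. Pick the minimal $t_0 \ge 2$ with $\i(t_0) \ne \jmath(t_0)$, WLOG $\i(t_0) = 2t_0 - 1$, $\jmath(t_0) = 2t_0$. Using a chain $p < q < s$ of length $2$, I want $\pf, \qf$ with $K_1 = \{k : p_k < q_k\}$ and $K_2 = \{k : q_k < p_k\}$ arranged so that the second-to-last displayed lemma's condition (c) is met for $\i$ but fails for $\jmath$ (or vice versa). Condition (c) says $\{\pf,\qf\}$ is an edge of $G_\i(P_r)$ iff on $K_1$ the map takes the value $2k-1$ and on $K_2$ it takes $2k$, \emph{or} the swapped version. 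So I want, say, $t_0 \in K_1$ and some other index $k_1 \in K_2$, arranged so that $\i$ realizes the pattern ``$2k-1$ on $K_1$, $2k$ on $K_2$'' but $\jmath$ realizes neither pattern. Since $\jmath(t_0) = 2t_0 = $ the ``$K_2$ value'' while $t_0 \in K_1$, $\jmath$ fails the first pattern; to also fail the swapped pattern I need $\jmath$ to not take value $2k-1$ on all of $K_1$ and $2k$ on all of $K_2$ — which I can force by additionally choosing $k_1 \in K_2$ with $\jmath(k_1) = 2k_1 - 1$, i.e. the ``$K_1$ value'' on a $K_2$-index. If no such $k_1$ exists then $\jmath(k) \in \{2k\}$ forced on every $k \ge 2$ that could be in $K_2$... this case analysis is where the real work is, and I expect it to be the main obstacle: I must show that one can always realize a suitable $K_1, K_2$ partition (with only the elements $p, q, s$ and the freedom to place $p, q, s$ in the coordinate slots subject to weak monotonicity) so that $\i$ satisfies (c) and $\jmath$ does not. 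Concretely, take $K_1 = \{t_0\}$ and $K_2 = \{t_1\}$ for a suitable $t_1 \ne t_0$ with $t_1 \ge 2$: realize this by $\pf$ and $\qf$ that agree outside $\{t_0, t_1\}$, with $p_{t_0} < q_{t_0}$ and $p_{t_1} > q_{t_1}$ — e.g. if $t_0 < t_1$, set $\pf: p^{t_0 - 1} q \, q^{t_1 - t_0 - 1} q \, s^{r - t_1}$ hmm, I need $p_{t_1} > q_{t_1}$, so let $\pf$ have $s$ in slot $t_1$ and $\qf$ have $q$ there; monotonicity of both multichains must be checked but is arrangeable by padding with $p$'s on the left and $s$'s on the right. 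Then by condition (c), $\{\pf,\qf\}$ is an edge of $G_\kappa(P_r)$ for $\kappa \in \mathcal I$ iff $(\kappa(t_0), \kappa(t_1)) \in \{(2t_0 - 1, 2t_1), (2t_0, 2t_1 - 1)\}$. Since $\i(t_0) = 2t_0 - 1$, this is an edge of $G_\i$ iff additionally $\i(t_1) = 2t_1$; and since $\jmath(t_0) = 2t_0$, it is an edge of $G_\jmath$ iff $\jmath(t_1) = 2t_1 - 1$. So I should pick $t_1 \ge 2$, $t_1 \ne t_0$, with $\i(t_1) = 2t_1$ and $\jmath(t_1) = 2t_1 - 1$. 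If such $t_1$ exists, we are done.

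The remaining case is that for every $t_1 \ge 2$ with $t_1 \ne t_0$ we do \emph{not} have both $\i(t_1) = 2t_1$ and $\jmath(t_1) = 2t_1 - 1$. By a symmetric argument (swapping the roles, using $K_1 = \{t_1\}, K_2 = \{t_0\}$, i.e. asking instead for $\i(t_1) = 2t_1 - 1$ and $\jmath(t_1) = 2t_1$) we reduce to: for every $t_1 \ge 2$, $t_1 \ne t_0$, the pair $(\i(t_1), \jmath(t_1))$ is \emph{not} $(2t_1-1, 2t_1)$ and \emph{not} $(2t_1, 2t_1-1)$, i.e. $\i(t_1) = \jmath(t_1)$ for all $t_1 \ne t_0$ — which is exactly the minimality of $t_0$ if $t_0$ is the unique differing index, but it could be that $\i, \jmath$ differ only at $t_0$. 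In that subcase I fall back on a pair differing at $t_0$ \emph{and} at $t_0 \pm 1$ or use the single-coordinate freedom differently: take $K_1 = \{t_0 - 1, t_0\}$ or similar and exploit that $\i(t_0 - 1) = \jmath(t_0 - 1)$ is pinned while $\i(t_0), \jmath(t_0)$ differ, so the pattern test in (c) — which requires a \emph{uniform} value across $K_1$ — will succeed for one of $\i, \jmath$ and fail for the other precisely because the required uniform value clashes with the pinned coordinate $t_0 - 1$ for exactly one of them. I expect to handle this cleanly by choosing $K_1, K_2$ adapted to $t_0$ and the (common) value $\i(t_0 - 1)$, but verifying monotonicity of the resulting $\pf, \qf$ using only a length-$2$ chain and checking condition (c) in each subcase is the routine-but-delicate core of the argument.
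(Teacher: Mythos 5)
Your proof of part (1) rests on a false premise. The hypothesis $\ell\leq 1$ does \emph{not} make $P$ an antichain: it allows comparable pairs $p<q$, and then $P_r$ contains plenty of non-constant multichains such as $p\leq\cdots\leq p\leq q$. Your ``cleanest route'' only examines constant multichains and concludes that $G_\i(P_r)$ is edgeless, which is wrong: for $P=\{p<q\}$ the pair $\{p^r,\,p^{r-1}q\}$ differs in the unique coordinate $r$ and is an edge of $G_\i(P_r)$ for every $\i\in\mathcal{I}$ by Lemma \ref{lem: edge}. The correct argument — and the one the paper gives — is the route you started and then abandoned: show that when $\ell\leq 1$, \emph{every} edge of \emph{every} $G_\i(P_r)$ joins two multichains differing in a unique coordinate (if they differed in two coordinates $k_1<k_2$, all entries lie in a single two-element chain, and monotonicity forces both differences to point the same way, which violates the interleaving condition $p_{k_2}\geq q_{k_1}$, resp.\ $q_{k_2}\geq p_{k_1}$, in the definition of $\les$); then Lemma \ref{lem: edge} says such pairs are edges for all $\i\in\mathcal{I}$ simultaneously, so all the graphs coincide.

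In part (2) your overall strategy (a pair differing in two coordinates whose edge status toggles between $\i$ and $\jmath$) is sound, but the case analysis is inverted and the decisive case is left unproved. With $K_1=\{t_0\}$, $K_2=\{t_1\}$, $\i(t_0)=2t_0-1$, $\jmath(t_0)=2t_0$, your own criterion says the pair is an edge of $G_\i$ iff $\i(t_1)=2t_1$ and an edge of $G_\jmath$ iff $\jmath(t_1)=2t_1-1$; to separate the graphs you need \emph{exactly one} of these to hold, whereas you ask for \emph{both}, which puts the edge in both graphs and proves nothing. (In fact your construction distinguishes $\i$ from $\jmath$ precisely when $\i(t_1)=\jmath(t_1)$ — the opposite of what you wrote — so your ``remaining case'' reduction is also scrambled.) The case you defer as ``routine-but-delicate'' is exactly where the content lies, and the paper resolves it in one stroke by always anchoring at coordinate $1$, where $\i(1)=\jmath(1)=1=2\cdot1-1$ is forced for every member of $\mathcal{I}$: taking $\pf:p\leq q^{k-1}\leq s^{r-k}$ and $\qf:q^{k-1}\leq s^{r-k+1}$ (differing at positions $1$ and $k$, both in $K_1$), the pair is an edge of $G_\kappa(P_r)$ iff $\kappa(k)=2k-1$, which holds for $\i$ and fails for $\jmath$. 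As a smaller caveat, the uniform-pattern criterion (c) you invoke must in each concrete configuration be checked against the actual interleaving inequalities $q_{t-1}\leq p_t\leq q_t$ (resp.\ $q_t\leq p_t\leq q_{t+1}$); it happens to give the right answer for the pairs you build, but leaning on it without that check is what leads you astray in part (1), where it would wrongly certify $\{p^r,q^r\}$ as an edge when $\i(t)=2t-1$ for all $t$.
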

\begin{proof}
 \begin{itemize}
     \item[(1)] By Lemma \ref{lem: edge} it suffices to show that if $\pf\les \qf$ then either $\pf= \qf$ or there is a unique $k\in[r]$ such that $p_k\neq q_k$.
     
     Suppose that $\pf\neq \qf$ and there are two different indices $k_1 < k_2$ such that $p_{k_1}\neq q_{k_1}$ and $p_{k_2}\neq q_{k_2}$. 
     Since $\ell \leq 1$ and $\pf \les \qf$
     we must have that $\ell = 1$ the elements of
     $\pf$ and $\qf$ come all from a chain $p<q$ of length one in $P$. Then we have either $p_{k_1}=p$, $q_{k_1}=q$, $p_{k_2}=p$ and $q_{k_2}=q$ which implies  $p_{k_2}< q_{k_1}$ or we have $p_{k_1}=q$, $q_{k_1}=p$,  $p_{k_2}=q$, $q_{k_2}=p$ which implies $p_{k_1}>q_{k_2}$. Both cases  contradict the hypothesis that $\pf\les \qf$. 
\item[(2)]
   Let $\{p<q<s\}$ be a chain of length two in $P$. Let $\i\neq\jmath\in \mathcal{I}$. Then there is $k\in [r]$ such that $\i(k)\neq \jmath(k)$. Without loss of generality assume that $\i(k)=2k-1$ and $\jmath(k)=2k$. It is enough to show that there are $\pf\neq \qf$ in $P_r$ such that $\{\pf,\qf\}$ is an edge of $G_{\i}(P_r)$ but $\{\pf,\qf\}$ is not an edge of $G_{\jmath}(P_r)$.  
 Let  $\pf: p<\underbrace{q\leq\cdots\leq q}_{k-1}\leq \underbrace{s\leq \cdots\leq s}_{r-k}$
 and $\qf: \underbrace{q\leq\cdots\leq q}_{k-1}\leq \underbrace{s\leq \cdots\leq s}_{r-k+1}$ in $P_r$. Then $\pf\les\qf$, $\qf\npreceq_{\jmath} \pf$ and 
 $\pf\npreceq_{\jmath} \qf$. Thus, $\{\pf,\qf\}$ is an edge of $G_{\i}(P_r)$  but $\{\pf,\qf\}$ is not an edge of $G_{\jmath}(P_r)$. 
 \end{itemize}
\end{proof}

\begin{ej} \label{ex:general}
   For $\i : [r] \rightarrow [2r]$ with
   $\i(t) = 2t-1$ then $\Delta(G_\i(P_r))$ is the edgewise subdivision of $\Delta(P)$ (see \cite{Nazirnew}).
   
   For $\i : [r] \rightarrow [2r]$ with
   $\i(t) = 2t$ if $t$ is even and $2t-1$ if $t$ is odd then we already know from Proposition \ref{prop:partial} that then 
   $\Delta(G_\i(P_r))$ is an order complex. By the comments after the proposition it is the order complex of the zig-zig order which coincides for even $r$ with a subdivision operation studied in \cite{CheegerMuller} (see \cite{Nazirnew}).
 \end{ej}
 
We have seen that even though postulating $\preceq_\i$ to be reflexive is quite restrictive, the resulting graphs $G_\i(P_r)$ have interesting geometric properties. Next we want to prove Theorem \ref{teo:homotopy}. Recall that the relation $\less$ coincides with $\les$ except that we impose reflexivity.  Since 
by Lemma \ref{lem:simple} (1) $\les$ is always antisymmetric it follows that
if $\les$ is  transitive, then $\less$ becomes a partial order.
Next we will prove Theorem \ref{teo:homotopy} and show that if $\less$ is a partial order then $\Delta(P)$ is homotopy equivalent to the order
complex $\Delta(P_r')$ of $P_r'$.

\begin{proof}[Proof of Theorem \ref{teo:homotopy}]
In the proof we will write $P_r'$ for $P_r$ ordered by  $\less$.

Consider the order complex $\D(P)\setminus\{\emptyset\}$ as a poset. Define a  map  $g:\D(P_r')\setminus\{\emptyset\}
\rightarrow \D(P)\setminus\{\emptyset\}$ as: $$\pf^{(1)}\less\cdots\less \pf^{(k)}\mapsto \cup_{j=1}^{k}\{p_1^{(j)}, \ldots, p_r^{(j)}\}$$ where $\pf^{(j)}: p_1^{(j)}\leq \cdots \leq p_r^{(j)}$. It is clear that $g$ is a poset map. By Quillen's Fiber Lemma, it is enough to show that the fibers $g^{-1}(\D(\{q_0<q_1<\cdots<q_n\})\setminus \{\emptyset\}) = \{ q_0 < q_1 < \cdots < 1_n\}_r'$ of all chains $\{q_0<q_1<\cdots<q_n\}$ in $P$, are contractible. 

 Set $Q=\{q_0 < q_1 <\cdots<q_n\}'_r$.
  Define a map 
  $\mathrm{cl}: Q\rightarrow Q$ by
\begin{eqnarray*}
% \nonumber % Remove numbering (before each equation)
  \pf:p_1\leq \cdots\leq p_r &\mapsto &\cl(\pf):\underbrace{\underbrace{p_{b_1}\leq \cdots\leq p_{b_1}}_{b_1}\leq \underbrace{p_{b_1+1}\leq \cdots\leq p_{b_1+1}}_{k_1}}_{c_1}  \\
   &\leq & \underbrace{\underbrace{p_{b_1+b_2}\leq \cdots\leq p_{b_1+b_2}}_{m_1}\leq \underbrace{p_{b_1+b_2+1}\leq \cdots\leq p_{b_1+b_2+1}}_{k_2}}_{c_2} \\
   &\leq&\underbrace{p_{b_1+b_2+b_3}\leq \cdots\leq p_{b_1+b_2+b_3}}_{m_2}\leq \cdots,
\end{eqnarray*}
 where  $k_t$ and $m_t$ are the sequences of positive integers such that  $\sum_{i=1}^{t}c_i=\sum_{i=1}^{t}b_i+k_t$ and $\sum_{i=1}^{t+1}b_i=\sum_{i=1}^{t}c_i+m_{t}$ for $t\geq 1$.
%In other words,
%$$\cl(p)_j=\left\{
%             \begin{array}{ll}
%               p_{b_1}, & \hbox{$1\leq j\leq b_1$;} \\
%               p_{b_1+1}, & \hbox{$b_1<j\leq c_1$;} \\
%               \vdots, & \hbox{$\vdots$;} \\
%               p_{b_1+\cdots+b_{t}}, & \hbox{$c_1+\cdots +c_{t-1}<j\leq b_1+\cdots+b_{t}$;} \\
%                p_{b_1+\cdots+b_{t}+1}, & \hbox{$b_1+\cdots+b_{t}<j\leq c_1+\cdots+c_{t}$.}
%             \end{array}
%           \right.
%$$

\medskip

 \noindent {\sf Claim 1:} 
 The operator $\mathrm{cl}$ is a closure operator on $Q$ (i.e., $\mathrm{cl}$ is a poset map
 such that $\pf \preceq_\i \mathrm{cl}(\pf)$ and $\mathrm{cl} \circ \mathrm{cl} = \mathrm{cl}$)

\medskip

\noindent $\triangleleft$ {\sf Proof of Claim 1:}
First, we show that $\pf\les \cl(\pf)$. Since $b_1<c_1<b_1+b_2<\cdots$, we have
\newline
$\underbrace{p_1\leq \cdots\leq p_{b_1}}_{b_1}\leq \underbrace{p_{b_1}\leq \cdots p_{b_1}\leq p_{b_1+1}\leq \cdots \leq p_{b_1+1}}_{c_1}\leq \underbrace{p_{b_1+1}\leq \cdots \leq p_{b_1+b_2}}_{b_2} \\
\leq \underbrace{p_{b_1+b_2}\leq \cdots\leq p_{b_1+b_2}\leq p_{b_1+b_2+1}\leq \cdots \leq p_{b_1+b_2+1}}_{c_2}\leq \cdots$\\
which shows that $\pf\les \cl(\pf)$.\\
Next, we  verify that $\cl$ is a poset map. Suppose that $\pf\les \pf'$. Then we have
$$\underbrace{p_1\leq \cdots\leq p_{b_1}}_{b_1}\leq \underbrace{p_1'\leq \cdots\leq p_{c_1}'}_{c_1}\leq\underbrace{p_{b_1+1}\leq \cdots\leq  p_{b_1+b_2}}_{b_2}\leq  \underbrace{p_{c_1+1}'\leq \cdots\leq p_{c_1+c_2}'}_{c_2}\leq \cdots.$$ Now again using the relation of $b$'s and $c$'s, we get
\begin{align*} 
  \underbrace{p_{b_1}\leq \cdots\leq  p_{b_1}}_{b_1} & \leq \underbrace{p_{b_1}'\leq \cdots\leq  p_{b_1}'\leq p_{b_1+1}'\leq \cdots \leq p_{b_1+1}'}_{c_1} \\ & \leq\underbrace{p_{b_1+1}\leq \cdots\leq  p_{b_1+1}\leq p_{b_1+b_2}\leq \cdots\leq  p_{b_1+b_2}}_{b_2} \\ & \leq \underbrace{p_{b_1+b_2}'\leq \cdots p_{b_1+b_2}'\leq p_{b_1+b_2+1}'\leq \cdots \leq p_{b_1+b_2+1}'\leq}_{c_2}\cdots \end{align*}
which implies $\cl(\pf)\les \cl(\pf')$.

The fact that $\cl(\pf)=\cl(\cl(\pf))$ follows directly from  the definition of $\cl$. $\triangleright$

\medskip

It is well known (see for example \cite[Exercise 5.2.6(a)]{wachs2006poset}) that since 
$\cl$ is closure operator the order complexes
$\Delta(Q)$ and $\Delta(\cl(Q))$ are homotopy equivalent. 

\medskip
Every element $\pf:p_1\leq p_2\leq \cdots \leq p_r$ of $\cl(Q)$ satisfies the following relation:
\begin{equation}\label{h'}
p_j=\left\{
            \begin{array}{ll}
               
               p_{2t-1}, & \hbox{$c_1+\cdots +c_{t-1}<j\leq b_1+\cdots+b_{t}$;} \\
                p_{2t}, & \hbox{$b_1+\cdots+b_{t}<j\leq c_1+\cdots+c_{t}$.}
             \end{array}
          \right.
\end{equation}
for $1\leq t\leq \ell$. Set $r_0=2\ell-2$ if $c_1+\cdots +c_{\ell-1}= b_1+\cdots+b_{\ell}$;  $r_0=2\ell-1$, otherwise. It is clear that $r_0\leq r$ due to the transitivity of $\i$. Let $\i_0 : [r_0] \rightarrow 
[2r_0]$ be defined by $\i_0 (t) = 2t$ if $t$ is 
even and $2t-1$ if $t$ is odd.
Set $Q_0 = \{ q_0 < q_1 < \cdots < q_n\}_{r_0}$ equipped with the relation $\preceq_{\i_0}$. Then by Proposition \ref{prop:order} $Q_0$ is partially ordered by $\preceq_{\i_0}$.

\medskip

\noindent {\sf Claim 2:}
  The poset $\cl(Q)$ is isomorphic to $Q_0$.

\medskip

\noindent $\triangleleft$ {\sf Proof of Claim 2:}
 Define a map $h:\cl(Q)\rightarrow Q_0$ by:
$$h(\pf)= p_1\leq p_2 \leq \cdots \leq p_{r_0}.$$
If $\pf\less \pf'$ then we get $p_1\leq p_1'\leq p_2'\leq p_2\leq p_3\leq \cdots $ which shows that  $h(\pf)\preceq_{\i_0}h(\pf')$. Thus, $h$ is a poset map.
Now, define $h': Q_0\rightarrow \cl(Q)$ as
$$p_1\leq p_2\leq \cdots \leq p_{r_0} \mapsto h'(p),$$ where $h'(p)$ is defined as in  \eqref{h'} for $b$'s and $c$'s defined by the map $\i$. This is clearly a poset map and an inverse of $h$. Thus $h$ is a poset isomorphism between $\cl(Q)$ and $Q_0$.
$\triangleright$

\medskip

By Theorem \ref{teo:reflexive} we know that $\Delta(Q_0)$ is a subdivision of the $n$-simplex. Hence, $Q$ which is homotopy equivalent to $\cl(Q)\cong Q_0$ is contractible.
\end{proof}

\section{Further directions and open questions} \label{sec:final}

The conclusion of Theorem \ref{teo:homotopy} is false for arbitrary 
$\i$ as the following counterexample shows.

\begin{ej} \label{ex:counter}
  Let $r=3$ and let $P = \{ 1 < 2 < 3\}$ be a three element chain.
  Set $\i(1) = 1$, $\i(2) = 2$ and $\i(3) = 4$. Then $\i$ is
  neither reflexive nor transitive. The maximal simplices of 
  $\Delta(G_\i(P_3))$ are
  $$\begin{array}{ccc}
  \{ 111, 112, 122 \} & \{ 111, 112, 123 \} & \{ 111, 112, 133 \} \\
  \{ 111, 113, 133 \} & \{ 112, 123, 233 \} & \{ 113, 133, 333 \} \\
  \{ 113, 233, 333 \} & \{ 123, 233, 333 \} & \{ 223, 233, 333 \} 
  \end{array}$$
  and
  
  $$\{ 112, 122, 222, 223, 233 \}.$$
  Note that here we write $ijk$ for element $i\leq j\leq k$ from $P_3$.
  
  A straightforward homology computation shows that the reduced homology group
  $\widetilde{H}_i(\Delta(G_\i(P_3)),\ZZ) = 0$ for $i \neq 1$ and $\ZZ$ 
  for $i = 1$. In particular, $\Delta(G_\i(P_3))$ cannot be contractible. Indeed, by a sequence of elementary collapses one 
  can show that $\Delta(G_\i(P_3))$ is homotopy equivalent to a
  circle.
  
  Since $\Delta(P)$ is a $2$-simplex and hence contractible we have 
  that $\Delta(G_\i(P_3))$ and $\Delta(P)$ are not homotopy equivalent.
 \end{ej}

  \begin{ques} \label{que:first} ~
  
    \begin{enumerate}
        \item Can one classify the $\i : [r] \rightarrow [2r]$ for which 
        $\Delta(P)$ and $\Delta(G_\i(P_r))$ are homotopy equivalent 
        for all $P$ ?
        \item Can the homotopy type of $\Delta(G_\i(P_r))$ be described in terms of the homotopy type of $P$ and $\i$ ? 
    \end{enumerate}  
\end{ques}

In \cite{muehle2015} M\"uhle defines a poset structure on 
$P_r$ by setting 

$$\mathfrak{p} \sqsubseteq \mathfrak{q} :\Leftrightarrow 
\, p_i \leq q_i \text{ for } i=1,\ldots, r$$
for multichains $\mathfrak{p} : p_1 \leq \cdots \leq p_r$ and
$\mathfrak{q} : q_1 \leq \cdots \leq q_r$.

It is easily seen that this partial order does not coincide with any of our constructions.
Nevertheless, it exhibits similar topological properties.

\begin{prop}
  \label{thm:missedbymuehle}
  The order complex $\Delta(P_r)$ of $P_r$ with order relation $\sqsubseteq$ is homotopy equivalent to the order complex of $P$.
\end{prop}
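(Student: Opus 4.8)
The plan is to produce a closure operator on the poset $(P_r,\sqsubseteq)$ whose image is an isomorphic copy of $P$, and then to invoke the same tool already used in the proof of Theorem~\ref{teo:homotopy}: a closure operator on a poset induces a homotopy equivalence between its order complex and the order complex of its image (see \cite[Exercise 5.2.6(a)]{wachs2006poset}).

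First I would define $\mathrm{cl}\colon P_r\rightarrow P_r$ by sending $\pf\colon p_1\leq\cdots\leq p_r$ to the constant multichain $p_r\leq p_r\leq\cdots\leq p_r$. The three defining properties of a closure operator with respect to $\sqsubseteq$ are immediate: it is monotone, because $\pf\sqsubseteq\qf$ forces $p_r\leq q_r$; it is extensive, i.e.\ $\pf\sqsubseteq\mathrm{cl}(\pf)$, precisely because a multichain is weakly increasing, so $p_j\leq p_r$ for all $j$; and it is idempotent, since constant multichains are fixed by it. This is the only place where the hypothesis that the $r$-tuples are multichains (rather than arbitrary elements of $P^r$) enters, and there it is essential.

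Then I would observe that $\mathrm{cl}(P_r)=\{\,p\leq p\leq\cdots\leq p\ :\ p\in P\,\}$, regarded as a subposet of $(P_r,\sqsubseteq)$, is isomorphic to $P$ via $p\mapsto(p,\dots,p)$. Combining this with the cited closure‑operator principle gives that $\Delta(P_r)$ is homotopy equivalent to $\Delta(\mathrm{cl}(P_r))\cong\Delta(P)$, which is the assertion.

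I do not expect a genuine obstacle here; the only point requiring a little care is the direction of monotonicity and extensivity — the dual choice $\pf\mapsto(p_1,\dots,p_1)$ is a kernel (interior) operator and would force one to pass to the opposite poset before reusing the same exercise. As an alternative proof, one can apply Quillen's Fiber Lemma to the poset map $\pi\colon(P_r,\sqsubseteq)\rightarrow P$ defined by $\pi(\pf)=p_r$: for $q\in P$ the fiber over the principal ideal $P_{\leq q}=\{p\in P: p\leq q\}$ is $\pi^{-1}(P_{\leq q})=\{\pf\in P_r:p_r\leq q\}$, which has $(q,\dots,q)$ as a maximum element (again because multichains are weakly increasing), hence is contractible; the Fiber Lemma then yields $\Delta(P_r)\simeq\Delta(P)$.
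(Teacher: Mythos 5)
Your proposal is correct and is essentially identical to the paper's own proof: the paper defines the same map $\phi(p_1\leq\cdots\leq p_r)=p_r\leq\cdots\leq p_r$, verifies it is a closure operator for $\sqsubseteq$, and invokes the same exercise from \cite{wachs2006poset} together with the isomorphism of the image with $P$. The Quillen fiber argument you sketch is a valid alternative but is not needed.
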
  
\begin{proof}
 Consider the map
  
  $$\phi : \left\{ \begin{array}{ccc} P_r & \rightarrow & P_{r}
 \\
                  p_1 \leq \cdots \leq p_r & \mapsto &
                  p_r \leq \cdots \leq p_{r} \end{array}
                  \right. .$$

Clearly, $\mathfrak{p} \sqsubseteq \mathfrak{q}$ implies 
$\phi(\mathfrak{p}) \sqsubseteq \phi(\mathfrak{q})$ for $\mathfrak{p},
\mathfrak{q} \in P_r$. 
The map $\phi$ also satisfies $\mathfrak{p} \sqsubseteq \phi( \mathfrak{p})$ and $\phi(\phi(\mathfrak{p})) = \phi(\mathfrak{p})$
and hence is a closure operator. Thus the order complex of
$P_r$ is homotopy equivalent to $\phi(P_r)$ (see \cite[Exercise 5.2.6(a)]{wachs2006poset}). But 
$\phi(P_r)$ is isomorphic to $P$ which implies the assertion.
\end{proof}

Note that the preceding result in not implied by and does not imply
the topological consequences of \cite[Proposition 3.5]{muehle2015}.
In that proposition $P$ has a unique minimal and unique maximal 
element which implies that $P_r$ ordered
by $\sqsubset$ has a unique minimal and a unique
maximal element. The order complex is then formed after
removing these elements. 

One can imagine a common generalization of the relations $\sqsubseteq$ and $\preceq_\i$.
For that consider a strictly monotone map $\i : [r] \rightarrow [2r]$ with associated
blocks $B_1,\ldots ,B_\ell$. For a map  $\kappa : [\ell] \rightarrow \{ \pm 1\}$ define the relation $\preceq_{i,\kappa}$ as follows.
For $\mathfrak{p} : p_1 \leq \cdots \leq p_r$ and
$\mathfrak{q} :q_1 \leq \cdots \leq q_r$ we set:

$$\mathfrak{p}\preceq_{\i,\kappa} \mathfrak{q} :\iff
                                        \left\{  \begin{array}{ll}
                                          p_t\geq q_s , & \text{ for } \i(t) \in B_r, \kappa(r) = 0 \text{ and } s\leq \imath(t)-t, \\
                                          p_t\leq q_s, & \text{ for } \i(t) \in B_r, \kappa(r) = 0 \text{ and } s>\imath(t)-t,\\
                                         p_t\geq q_t , & \text{ for } \i(t) \in B_r, \kappa(r) = 1.
                                          \end{array} \right.
$$ for $\mathfrak{p},\mathfrak{q}\in \PP$.

If $\kappa(r) = 1$ for all $r \in [\ell]$ then $\preceq_{\i,\kappa} = 
\sqsubseteq$ and if $\kappa(r) = 0$ for all $r \in [\ell]$ then $\preceq_{\i,\kappa} = \preceq_\i$.
We can again form a graph $G_{\i,\kappa}(P_r)$ on $P_r$ whose edges are given by $r$-chains $\pf$ and $\qf$ such that either $\pf \prec_{\i,\kappa}
\qf$ or $\qf \prec_{\i,\kappa} \pf$.

The following question partly extends Question \ref{que:first}.

\begin{ques} 
  For which choices of $\kappa$ and 
  $\i$ is $\Delta(G_{\i,\kappa}(P_r))$ homeomorphic (resp.,  homotopy
  equivalent) to $\Delta(P)$ ?
\end{ques}
\bibliographystyle{amsalpha}
\bibliography{References}
\end{document}